\documentclass[a4paper]{article}
\usepackage{amsmath}
\usepackage{amsfonts}
\usepackage{amssymb}
\usepackage{amsthm}
\usepackage{mathtools}
\usepackage{tikz}
\usepackage{tikz-cd}
\usepackage{enumerate}
\usepackage{layout}
\usepackage{mathabx}
\usepackage{bm}
\usetikzlibrary{patterns}

\DeclareMathOperator{\id}{id}
\DeclareMathOperator{\Aut}{Aut}
\DeclareMathOperator{\Out}{Out}
\DeclareMathOperator{\Inn}{Inn}
\DeclareMathOperator{\cd}{cd}

\DeclareMathOperator{\Hom}{Hom}

\DeclareMathOperator{\Ext}{Ext}
\DeclareMathOperator{\Tor}{Tor}

\DeclareMathOperator{\ind}{ind}

\DeclareMathOperator{\im}{im}

\newcommand{\bdy}{\ensuremath{\partial}}

\newcommand{\iso}{\ensuremath{\cong}}
\newcommand{\Z}[1][]{\ensuremath{\mathbb{Z}_{#1}}}
\newcommand{\Q}{\ensuremath{\mathbb{Q}}}

\newcommand{\N}{\ensuremath{\mathbb{N}}}

\newcommand{\F}{\ensuremath{\mathbb{F}}}

\newcommand{\proP}[2][p]{\ensuremath{\widehat{#2}_{(#1)}}}

\newcommand{\gp}[1]{\ensuremath{\langle #1\rangle}}
\DeclarePairedDelimiter\gpn{\langle\! \langle}{\rangle\! \rangle}

\newcommand{\famS}[0]{\ensuremath{\mathcal{S}}}

\newcommand{\Zpof}[1]{{\ensuremath{\Z[p][\![#1]\!]}}}

\newcommand{\fkB}{\ensuremath{\mathfrak{B}}}
\newcommand{\lqt}{\backslash}

\newtheorem{theorem}{Theorem}[section]

\newtheorem{prop}[theorem]{Proposition}

\theoremstyle{definition}
\newtheorem{defn}[theorem]{Definition}
\newtheorem*{cnv}{Conventions}

\theoremstyle{remark}
\newtheorem*{rmk}{Remark}

\theoremstyle{plain}

\newcounter{introthmcount}
\setcounter{introthmcount}{0}

\theoremstyle{definition}

\newcommand{\U}{\ensuremath{\mathbb{U}}}
\newcommand{\cgp}[1]{\ensuremath{\overline{\gp{#1}}}}
\newcommand{\modn}[1]{\ensuremath{\quad \text{modulo }G_{#1}}}
\DeclareMathOperator{\gr}{gr}
\newcommand{\grj}[1][j]{\ensuremath{\gr_{#1}(G)}}
\DeclareMathOperator{\tg}{tg}
\DeclareMathOperator{\qt}{qt}
\DeclareMathOperator{\stab}{stab}
\DeclareMathOperator{\val}{val}
\DeclareMathOperator{\rk}{rk}

\title{Classification of pro-$p$ PD$^2$ pairs and the pro-$p$ curve complex}
\author{Gareth Wilkes}
\begin{document}
\maketitle

\begin{abstract}
We classify pro-$p$ Poincar\'e duality pairs in dimension two. We then use this classification to build a pro-$p$ analogue of the curve complex and establish its basic properties. We conclude with some statements concerning separability properties of the mapping class group.
\end{abstract}
\section*{Introduction}
A key object in the study of self-homeomorphisms of a surface $\Sigma$ is the curve complex, a simplicial complex whose simplices are isotopy classes of certain multicurves on $\Sigma$. When working with the curve complex one often simplifies matters by assuming that a class of multicurves is in some simple standard form, and asserts that this is acceptable by noting that every multicurve may be taken to a standard form by means of some self-homeomorphism of $\Sigma$. One proves this by cutting $\Sigma$ along the multicurve and using the classification of surfaces with boundary. Thus the curve complex may be viewed as built using this classification as a foundation.  

In this paper we will carry out this program in the realm of pro-$p$ groups. The notions of `relative (co)homology' and `Poincar\'e duality pair' for profinite groups were defined and developed in the author's paper \cite{Wilkes17}. In the first part of this paper we will classify those pro-$p$ group pairs which are Poincar\'e duality pairs of dimension 2. The end result (Theorem \ref{MainThm}) is closely related to (and indeed relies upon) the classification of Demushkin groups completed by Demushkin, Serre, and Labute \cite{demus61, demus63,Serre62,labute67}. 

In Section \ref{secGimel} we use this classification to classify the ways that the pro-$p$ completion $G$ of $\pi_1\Sigma$ (for $\Sigma$ a compact orientable surface) may split as a graph of pro-$p$ groups with edge stabilisers $\Z[p]$ and show that, up to automorphism, the only possible splittings are the obvious geometric ones.  Considering splittings as being dual to `simple closed curves' this analysis yields a `pro-$p$ curve complex'---a profinite space, containing the abstract curve complex, on which $\Out(G)$ has a natural action. This raises the question of what properties of automorphisms can be deduced from the action on this pro-$p$ curve complex---in particular, is it possible for a pseudo-Anosov automorphism of $\pi_1\Sigma$ to fix a point in the pro-$p$ curve complex? In Section \ref{secpCongTop} we will note how the action on the pro-$p$ curve complex gives information concerning certain separability properties of the mapping class group.

It is perhaps somewhat amusing to compare the order of this program with the classification of discrete PD$^2$ groups and PD$^2$ pairs. In the discrete realm, the PD$^2$ groups were shown to be precisely the surface groups by Bieri, Eckmann, M\"uller and Linnell \cite{BE77, EM80, EL83}. As noted in \cite[Section 11]{BE77} this implies the classification of PD$^2$ pairs via the classification of splittings of surface groups. In our present case the PD$^2$ pairs are classified first, and then applied to study splittings. 

The author would like to thank his supervisor Marc Lackenby for his support. The author was supported by the EPSRC.

\begin{cnv} In this paper we adopt the following conventions.
\begin{itemize}
\item The symbol $p$ will denote a fixed prime number. The symbol \Z[p] denotes the $p$-adic integers and $\F_p$ the field with $p$ elements.
\item Any appearance of $p^\infty$ should be interpreted as zero.
\item All groups will be pro-$p$ groups and all subgroups closed unless otherwise specified. All homomorphisms will be continuous.
\item For group elements $x$ and $y$, the conjugate $y^{-1}xy$ will be denoted $x^y$ and the commutator $[x,y]$ will mean $x^{-1}y^{-1}xy=x^{-1}x^y$.
\item Let $G$ be a pro-$p$ group and let $M$ be a finite $p$-primary $G$-module. Then $M^\ast$ will denote the Pontrjagin dual $\Hom(M, \Q_p/\Z[p])$ with action $(g\cdot f)(m)=f(g^{-1}m)$.
\item The automorphism group $\Aut(\Z[p])$ will be denoted $\U_p$.
\item For a pro-$p$ group $G$ and a homomorphism $\rho\colon G\to\U_p$ we will denote the the module with underlying abelian group $\Z[p]$ and $G$-action $\chi$ by $\Z[p](\chi)$. The module $\Z/p^m$ with action given by $\rho$ will be denoted $M_m(\rho)$ and its Pontrjagin dual by $I_m(\rho)$.
\end{itemize}
\end{cnv}
\section{Preliminaries}
\subsection{Automorphisms of \Z[p]}
In this section we set up some notation and preliminaries concerning automorphisms of the $p$-adic integers and their quotient groups. First recall the structure of the automorphism group.
\begin{theorem}[Theorem 4.4.7 of \cite{RZ00}]
Let $p$ be a prime.
\begin{itemize}
\item If $p\neq 2$ then $\U_p=\Z[p]^{\!\times}\iso C_{p-1}\times \Z[p]$ where the second factor is generated by the automorphism $1\mapsto 1+p$. 
\item $\U_2=\Z[2]^{\!\times}\iso C_{2}\times \Z[2]=\gp{-1}\times\cgp{1\mapsto 1+4}$ 
\end{itemize}
\end{theorem}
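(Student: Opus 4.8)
The plan is to reduce the statement to a computation in the unit group $\Z[p]^{\!\times}$ and then analyse that group via its reduction modulo $p$. First I would note that $\Z[p]$ is topologically generated by $1$, so any continuous endomorphism is determined by the image of $1$; such an endomorphism is an automorphism precisely when $1$ maps to a topological generator, i.e.\ to an element of $\Z[p]^{\!\times}$, and the assignment $\phi\mapsto\phi(1)$ is an isomorphism of topological groups $\U_p=\Aut(\Z[p])\xrightarrow{\ \sim\ }\Z[p]^{\!\times}$ with composition corresponding to multiplication (an inverse of $1\mapsto a$ being $1\mapsto a^{-1}$). Under this identification the automorphism $1\mapsto 1+p$ is the unit $1+p$, so it suffices to prove $\Z[p]^{\!\times}\iso C_{p-1}\times\Z[p]$ for odd $p$ and $\Z[2]^{\!\times}\iso C_2\times\Z[2]$.

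For odd $p$ I would use the short exact sequence $1\to U_1\to\Z[p]^{\!\times}\to\F_p^{\!\times}\to 1$, where $U_1=1+p\Z[p]$ is the group of principal units. Since $\F_p^{\!\times}$ is cyclic of order $p-1$, which is prime to $p$, Hensel's lemma (equivalently the Teichm\"uller lift) gives a canonical section with image the group $\mu_{p-1}\iso C_{p-1}$ of $(p-1)$-st roots of unity in $\Z[p]$; as $U_1$ is pro-$p$ and $|\mu_{p-1}|$ is prime to $p$, the two subgroups intersect trivially and $\Z[p]^{\!\times}=\mu_{p-1}\times U_1$, with $\mu_{p-1}$ the torsion subgroup. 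The heart of the matter is then to show $U_1\iso\Z[p]$: I would prove by induction on $n$ the congruence $(1+p)^{p^n}\equiv 1+p^{n+1}\pmod{p^{n+2}}$, which shows simultaneously that $1+p$ has infinite order and that its image generates each finite quotient $U_1/(1+p^{n+1}\Z[p])$, so $\cgp{1\mapsto 1+p}=U_1$ is procyclic and torsion-free, hence isomorphic to $\Z[p]$.

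For $p=2$ there is no reduction to perform, since $\Z[2]^{\!\times}=1+2\Z[2]$ already; instead I would split off the torsion subgroup $\gp{-1}\iso C_2$ by hand. The analogous congruence $(1+4)^{2^n}\equiv 1+2^{n+2}\pmod{2^{n+3}}$, again by induction, shows that $5=1+4$ topologically generates $U_2:=1+4\Z[2]$ and has infinite order, so $U_2=\cgp{1\mapsto 1+4}\iso\Z[2]$. A short computation modulo $4$ (every $u\in 1+2\Z[2]$ satisfies $u\equiv 1$ or $3\pmod 4$, and $-u\equiv 1\pmod 4$ in the second case) gives $1+2\Z[2]=\gp{-1}\,U_2$, and since $U_2$ is torsion-free we have $\gp{-1}\cap U_2=\{1\}$, so the product is direct: $\Z[2]^{\!\times}\iso C_2\times\Z[2]$. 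I expect the main obstacle to be the bookkeeping in the $p=2$ case---in particular the shifted exponent in the $2$-adic congruence and checking that the decomposition is a genuine direct product---rather than any conceptual difficulty; the odd case is routine once the key congruence is in hand.
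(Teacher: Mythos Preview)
Your argument is correct and is the standard proof of this classical fact. Note, however, that the paper does not supply its own proof of this statement: it is quoted as Theorem~4.4.7 of \cite{RZ00} and used as background, so there is nothing in the paper to compare your approach against. Your write-up would serve perfectly well as a self-contained justification.
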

Note that for $p\neq 2$ the image of any homomorphism from a pro-$p$ group to $\U_p$ meets $C_{p-1}$ trivially. Following \cite{labute67} we give the following notation to the pro-$p$ subgroups of these automorphism groups. For $1\leq k\leq \infty$ an integer set
\[\U_p^{(k)} = \cgp{1\mapsto 1+p^k}\leq\U_p, \quad \U_2^{[k+1]}= \cgp{1\mapsto -1+2^{k+1}}\leq \U_2 \]

Let $G$ be a pro-$p$ group and let $\chi\colon G\to \U_p$ be a homomorphism. Note that since $\Z[p]$ has a unique subgroup of index $p^m$ for any $m$ there is an induced map from $G$ to $\Aut(\Z/p^m)$. We may define an invariant $q(\chi)$ to be the highest power $p^m$ of $p$ such that $G\to\Aut(\Z/p^m)$ is trivial. If this maximum does not exist (i.e.\ $\chi$ is trivial) then by convention we set $q(\chi)=p^\infty=0$. Note that for any pro-$p$ group $G$ and any $\chi\colon G\to\U_p$ the image of $G$ in $\Aut(\F_p)$ is trivial. 

One may easily show that if $q(\chi)=p^m\neq 2$ then \(\im(\chi) = \U_p^{(m)} \). If $q(\chi)=2$ then the image of $\chi$ may be either $\gp{-1}\times \U_2^{(f)}$ or $\U_2^{[f]}$ where $f\geq 2$ or $f=\infty$. Note that $\U_2^{(1)}=\U_2^{[2]}$.

\subsection{Lower central $q$-series}
Let $G$ be a pro-$p$ group and let $q=p^m$ for some $1\leq m\leq \infty$. The {\em lower central $q$-series} of $G$ is the sequence of closed subgroups of $G$ defined by
\[G_1 = G, \quad G_{j+1}=G_j^q\overline{[G_j,G]}\]
We may define 
\[\grj{} = G_j/G_{j+1}, \quad \grj[]=\sum_{j\geq 1} \grj{} \]
The fact that $[G_i, G_j]\subseteq G_{i+j}$ implies that $\grj[]$ is a graded Lie algebra over $\Z[p]/q\Z[p]$, with Lie bracket induced by the commutator on homogeneous elements of \grj[].

If $q\neq 0$ then $x\mapsto x^q$ induces a linear mapping $\phi_i\colon \grj{}\to\grj[j+1]$ for all $j$. If $\pi$ denotes an indeterminate over $\Z[p]/q\Z[p]$ we may let the polynomial ring $ \Z[p]/q\Z[p] [\pi]$ act on $\grj[]$ by defining $\pi\cdot \xi =\phi_j(\xi)$ when $\xi\in\grj{}$. By convention we extend this to the $q=0$ case by setting $\pi=0\in\Z[p]$. This makes $\grj[]$ into an algebra over $\Z[p]/q\Z[p][\pi]$. It is a Lie algebra provided $q=0$ or $p\neq 2$. When $p=2$ only $\sum_{j>1} \grj$ is a Lie algebra. This issue will not be of concern to us.

All the identities concerning \grj{} are derived from {\em commutator identites}. We will record some here for later use. The proofs are elementary. Let $u,v,w\in G$ and $k\in \Z$.
\begin{itemize}
\item $[uv,w] = [u,w][[u,w],v][v,w]$
\item $[u,vw] = [u,w][u,v][[u,v],w]$
\item for $u\in G_i, v\in G_j$ we have $[u^k,v]\equiv [u,v]^k\equiv [u,v^k]$ modulo $G_{i+j+1}.$
\end{itemize}

\subsection{PD$^n$ groups and PD$^n$ pairs}
It would be neither necessary nor expedient to give a full account of the theory of profinite PD$^n$ groups. We will give the minimal treatment that suffices for the present paper. For a proper discussion for profinite groups see \cite{SW00}, and for pro-$p$ groups see \cite[Section I.4.5]{Serre13}. Depending on your viewpoint the following is either a definition or a theorem.
\begin{defn} Let $G$ be a pro-$p$ group. Then $G$ is a {\em PD$^n$ group} if
\begin{itemize}
\item $\dim H^k(G,\F_p)$ is finite for all $k$;
\item $\dim H^n(G,\F_p)=1$; and
\item for all $k$ the cup product $H^k(G,\F_p)\times H^{n-k}(G,\F_p)\to H^n(G,\F_p)$ is a non-degenerate bilinear form.
\end{itemize}
\end{defn}
One may show (see \cite[Proposition 30]{Serre13} or \cite[Theorem 4.5.6]{SW00}) that these conditions are enough to guarantee a more extensive duality as follows. There is a unique homomorphism $\chi\colon G\to\U_p$ called the {\em orientation character} with the following properties. Let $I_p(G)=(\Z[p](\chi))^\ast$ denote $\Q_p/\Z[p]$ equipped with the $G$ action dual to $\chi$. Then $H^n(G,I_p(G))=\Q_p/\Z[p]$ and the cup product 
\[H^k(G, M)\otimes H^{n-k}(G, \Hom(M, I_p(G)))\to H^n(G, I_p(G)) \]
induces isomorphisms
\[H^k(G,M)\iso H^{n-k}(G, \Hom(M, I_p(G)))^\ast \iso H_{n-k}(G, \Z[p](\chi)\otimes M) \] 
for any finite $p$-primary $G$-module $M$. Here the last isomorphism is induced by Pontrjagin duality (see for example \cite[Proposition 6.3.6]{RZ00}), together with the identity 
\[\Hom(M, \Z[p](\chi)^\ast) = (\Z[p](\chi)\otimes M)^\ast \]

The theory of the (co)homology of a profinite group relative to a collection of subgroups was defined and studied in \cite{Wilkes17}. We will state a restricted version of the definition here and leave it to the curious reader to read further.
\begin{defn}[Definition 2.1 of \cite{Wilkes17}]
Let $G$ be a pro-$p$ group and $\famS=\{S_0,\ldots, S_b\}$ be a finite family of closed subgroups $(b\geq 0)$. Consider the short exact sequence
\[0\to \Delta_{G,\famS}\to \bigoplus_{i=0}^b \Zpof{G/S_i} \to \Z[p]\to 0 \]
where the final map is the augmentation. Then define
\[H_{\bullet+1}(G,\famS; M)=\Tor^G_\bullet(\Delta_{G,\famS}^\perp, M), \quad H^{\bullet+1}(G,\famS, A) = \Ext_G^\bullet(\Delta_{G,\famS}, A) \]
where $M$ is an inverse limit of finite $p$-primary $G$-modules, $A$ is a direct limit of such modules and $\Delta_{G,\famS}^\perp$ denotes $\Delta_{G,\famS}$ when considered as a right module via $\delta\cdot g=g^{-1}\delta$.
\end{defn}
The definition can actually be made for certain infinite families of subgroups as well. However in the case of duality pairs the family of peripheral subgroups is forced to be finite \cite[Proposition 5.4]{Wilkes17} so we will not concern ourselves with this. If we replace each $S_i$ by a conjugate $S_i^{g_i}$ then this leaves the cohomology invariant up to natural isomorphism by \cite[Proposition 2.9]{Wilkes17}. 

This cohomology theory is natural with respect to {\em maps of pro-$p$ group pairs}. If $(G,\famS)$ and $(G',\famS')$ are pro-$p$ group pairs with $\famS=\{S_0,\ldots, S_b\}$ and $\famS'=\{S'_0,\ldots, S'_{b'}\}$, then a map of pairs $(G,\famS)\to (G',\famS')$ consists of a function $f\colon\{1,\ldots,b\}\to\{1,\ldots, b'\}$ and a group homomorphism $\phi\colon G\to G'$ such that $\phi(S_i)\subseteq S'_{f(i)}$ for all $i$. In this circumstance one obtains natural maps
\[H_\bullet(G,\famS; M)\to H_\bullet(G',\famS'; M), \quad H^\bullet(G',\famS'; A)\to H^\bullet(G,\famS; A) \] 
for any $G'$-modules $M$ and $A$ as above, regarded as $G$-modules via $\phi$. See Proposition 2.6 of \cite{Wilkes17}.

Two important facts we will use freely are Pontrjagin duality
\[H_k(G,\famS; M)^\ast= H^k(G,\famS;M^\ast)\]
and the existence of a cup product
\[H^k(G,\famS; A)\otimes H^{n-k}(G, B) \to H^{n}(G,\famS; C) \]
for (direct limits of) finite $p$-primary $G$-modules $A$, $B$ and $C$ and a pairing $A\otimes B\to C$.

Again the following definition is more properly a theorem.
\begin{defn}[Theorem 5.15 of \cite{Wilkes17}] Let $G$ be a pro-$p$ group and $\famS$ a collection of closed subgroups of $G$. Then $(G,\famS)$ is a {\em PD$^n$ pair} if
\begin{itemize}
\item $\dim H^k(G, \famS; \F_p)$ is finite for all $k$;
\item $\dim H^n(G,\famS;\F_p)=1$; and
\item for all $k$ the cup product $H^k(G,\famS;\F_p)\times H^{n-k}(G,\F_p)\to H^n(G,\famS;\F_p)$ is a non-degenerate bilinear form.
\end{itemize}
\end{defn}
Again for a PD$^n$ pair $(G,\famS)$ there exits a unique {\em orientation character} $\chi \colon G\to \U_p$ with similar properties to above. Namely if $I_p(G, \famS)=\Z[p](\chi)^\ast$ denotes $\Q_p/\Z[p]$ equipped with the $G$ action dual to $\chi$, then $H^n(G,\famS; I_p(G,\famS))=\Q_p/\Z[p]$ and the cup product
\[H^k(G,\famS; M)\otimes H^{n-k}(G, \Hom(M, I_p(G,\famS)))\to H^n(G,\famS; I_p(G,\famS)) \]
with respect to the evaluation pairing induces isomorphisms
\[H^k(G,\famS;M)\iso H^{n-k}(G, \Hom(M, I_p(G,\famS)))^\ast \iso H_{n-k}(G, \Z[p](\chi)\otimes M) \] 
for any finite $p$-primary $G$-module $M$. See \cite[Section 5.1]{Wilkes17} for more details on PD$^n$ pairs.

Let us now state the immediate consequences of being a PD$^2$ pair. 
\begin{prop}
Let $(G,\famS)$ be a PD$^2$ pair with orientation character $\chi$. Then $G$ is a finitely generated free pro-$p$ group and $\famS$ is a finite family of infinite procyclic subgroups of $G$. Moreover $\chi$ vanishes on each element of $\famS$. 
\end{prop}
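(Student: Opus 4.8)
The plan is to run the standard machinery for Poincaré duality pairs: the nondegeneracy of the cup product, the long exact sequence attached to the short exact sequence defining $(G,\famS)$, the extended duality $H^k(G,\famS;M)\cong H_{2-k}(G,\Z[p](\chi)\otimes M)$, and Serre's characterisations that a pro-$p$ group $G$ is free pro-$p$ iff $\cd_p G\le 1$ iff $H^2(G,\F_p)=0$. Throughout I use that $\chi$ is trivial modulo $p$, so $\Z[p](\chi)\otimes\F_p=\F_p$.

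\emph{$G$ is finitely generated and free.} Since $H^0(G,\famS;\F_p)$ is by definition $\Ext_G^{-1}(\Delta_{G,\famS},\F_p)=0$, the $k=0$ instance of the nondegeneracy axiom — the pairing $H^0(G,\famS;\F_p)\times H^2(G,\F_p)\to H^2(G,\famS;\F_p)$ — forces $H^2(G,\F_p)=0$, hence $\cd_pG\le 1$ and $G$ is free pro-$p$. Taking $k=1$ instead, the pairing $H^1(G,\famS;\F_p)\times H^1(G,\F_p)\to\F_p$ is perfect and its left factor is finite dimensional by the first axiom, so $d(G)=\dim H^1(G,\F_p)<\infty$ and $G$ is finitely generated.

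\emph{$\famS$ is a family of infinite procyclic subgroups.} In the long exact sequence $\cdots\to H^k(G,M)\to\bigoplus_i H^k(S_i,M)\to H^{k+1}(G,\famS;M)\to H^{k+1}(G,M)\to\cdots$ one has $H^k(G,M)=0$ for $k\ge2$ (as $\cd_pG\le1$) and $H^k(G,\famS;M)\cong H_{2-k}(G,\Z[p](\chi)\otimes M)=0$ for $k\ge3$; hence $\bigoplus_i H^k(S_i,M)=0$ for all $k\ge2$ and all finite $p$-primary $M$, so each $S_i$ is free pro-$p$. The low degree part of the sequence with $\F_p$-coefficients is an exact sequence
\[0\to\F_p\to\textstyle\bigoplus_i H^0(S_i,\F_p)\to H^1(G,\famS;\F_p)\to H^1(G,\F_p)\to\bigoplus_i H^1(S_i,\F_p)\to H^2(G,\famS;\F_p)\to0,\]
which exhibits $\bigoplus_i H^1(S_i,\F_p)$ as finite dimensional, so each $S_i$ is finitely generated; its Euler characteristic, using $\dim H^1(G,\famS;\F_p)=\dim H^1(G,\F_p)$ from the $k=1$ pairing, gives $\sum_i d(S_i)=\lvert\famS\rvert$. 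It remains to rule out $S_i=1$, which then forces each $d(S_i)=1$ and $S_i\cong\Z[p]$. If $\famS=\{S_0\}$ with $S_0=1$ then $\Delta_{G,\famS}$ is the augmentation ideal and $H^2(G,\famS;\F_p)\cong H^2(G,\F_p)=0$, contradicting the second axiom. If $\lvert\famS\rvert\ge2$ and some $S_{i_0}=1$, let $\eta\in H^1(G,\famS;\F_p)$ be the image of the generator of $H^0(S_{i_0},\F_p)$ under the connecting map $\bigoplus_i H^0(S_i,\F_p)\to H^1(G,\famS;\F_p)$; this $\eta$ is nonzero (the kernel of the connecting map is the diagonal copy of $H^0(G,\F_p)$), yet, since the connecting maps are $H^*(G;\F_p)$-linear and $\res_{S_{i_0}}$ lands in $H^1(S_{i_0},\F_p)=0$, we get $y\cup\eta=0$ for every $y\in H^1(G,\F_p)$, contradicting the $k=1$ nondegeneracy.

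\emph{$\chi$ vanishes on each $S_i$.} Here I would invoke the peripheral structure of PD pairs from \cite{Wilkes17}: each $S_i$ is a PD$^1$ group with orientation character the restriction $\chi|_{S_i}$. Now a PD$^1$ pro-$p$ group is isomorphic to $\Z[p]$ (consistent with the previous step) and is orientable: if $t$ generates it and $\chi_0$ is its orientation character, then $\Q_p/\Z[p]=H^1(\Z[p],\Z[p](\chi_0)^\ast)=(\Q_p/\Z[p])\big/(\chi_0(t)-1)(\Q_p/\Z[p])$, which forces $\chi_0(t)=1$. Hence $\chi|_{S_i}$ is trivial. I expect this last point to be the main obstacle toward a self-contained proof: the duality isomorphism applied to the twisted modules $M_m(\chi)$ only shows that the boundary map $H_2(G,\famS;M_m(\chi))\to\bigoplus_i H_1(S_i,M_m(\chi))$ is injective (equivalently that $\bigoplus_i H^1(S_i,M_m(\chi))$ surjects onto $H^2(G,\famS;M_m(\chi))$), which yields $\chi|_{S_i}$ trivial for at least one $i$ but not a priori for all of them; controlling each peripheral summand separately is precisely the content of the peripheral-structure result.
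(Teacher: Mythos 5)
Your proof is correct, and for two of the three assertions it takes a genuinely more self-contained route than the paper. The paper disposes of the whole proposition by citation: Proposition 5.4 of \cite{Wilkes17} gives that $\famS$ is a finite family of PD$^1$ groups with orientation characters $\chi|_{S_i}$ (whence each $S_i\iso\Z[p]$ and $\chi|_{S_i}=1$, by essentially the same orientability computation you give), and Corollary 5.8 of \cite{Wilkes17} gives $\cd_p(G)=1$, whence $G$ is free pro-$p$; finite generation is then read off, as in your argument, from the $k=1$ pairing. Your derivation of $H^2(G,\F_p)=0$ from the vanishing of $H^0(G,\famS;\F_p)$ in the $k=0$ pairing, and of the structure of the $S_i$ from the long exact sequence, the Euler-characteristic count $\sum_i d(S_i)=|\famS|$, and the exclusion of trivial peripheral subgroups via the compatibility of connecting maps with cup products, amounts to an elementary re-proof of those two cited results in the special case $n=2$; what it buys is independence from the general peripheral-structure machinery of \cite{Wilkes17} for everything except the identification of $\chi|_{S_i}$ as the orientation character of $S_i$, which, as you correctly flag, is exactly where Proposition 5.4 of \cite{Wilkes17} is genuinely needed and where the paper also invokes it. Two small remarks: the finiteness of $\famS$ is built into the definition used in this paper (the reduction from infinite families being again [Wilkes17, Proposition 5.4]), so you need not worry about it; and the $H^\ast(G;\F_p)$-linearity of the connecting maps that you use to kill $y\cup\eta$ is precisely Equation 3.6 of \cite{Wilkes17}, the ``commutative pentagon'' that reappears in the proof of Theorem \ref{CappingOff1}, so that step rests on solid ground.
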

\begin{proof}
By \cite[Proposition 5.4]{Wilkes17}, $\famS$ is a finite family $\famS=\{S_0,\ldots, S_b\}$ with each $S_i$ a PD$^1$ group with orientation character $\chi|_{S_i}$. The only pro-$p$ PD$^1$ group is \Z[p], which is orientable and so each $S_i$ is cyclic with $\chi|_{S_i}=1$. Furthermore by \cite[Corollary 5.8]{Wilkes17} we find $\cd_p(G)=1$, so by (for example) \cite[Theorem 7.7.4]{RZ00} $G$ is a free pro-$p$ group. It is finitely generated since $H^1(G,\F_p)$ is finite.
\end{proof}

\begin{prop}\label{PropStdBasis}
Let $(G,\famS)$ be a PD$^2$ pair with $\famS=\{S_0,\ldots, S_b\}$. Fix a generator $s_0$ of $S_0$. Then the rank of $G$ is $n+b$ for some $n\geq 0$ and we may choose a basis \[\{x_1,\ldots, x_n, s_1, \ldots, s_b\}\] of $G$ such that 
\begin{itemize}
\item $s_i$ is a generator of $S_i$ for all $i$
\item $s_0\equiv s_1\cdots s_b$ modulo $G^p[G,G]$
\item $\zeta_i(s_0)=1$ for all $i$, where $\zeta_i$ is the projection $G\to\Z[p]$ obtained by killing all the elements of our chosen basis other than $s_i$.
\end{itemize}
\end{prop}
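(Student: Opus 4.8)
The plan is to run the defining short exact sequence of $(G,\famS)$ through $\Ext^\bullet_G(-,\F_p)$, read off from the resulting long exact sequence both the rank of $G$ and the relation satisfied by the peripheral generators in $H_1(G;\F_p)$, prove that this relation involves every peripheral generator, and finally do elementary $\Z[p]$-linear algebra on $G^{\mathrm{ab}}$ to convert it into a basis with the stated properties.

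Applying $\Ext^\bullet_G(-,\F_p)$ to $0\to\Delta_{G,\famS}\to\bigoplus_{i=0}^b\Zpof{G/S_i}\to\Z[p]\to 0$ and using Shapiro's lemma on the middle term ($\Ext^\bullet_G(\Zpof{G/S_i},\F_p)\iso H^\bullet(S_i;\F_p)$) gives the exact sequence
\[
0\to H^0(G;\F_p)\xrightarrow{r_0}\bigoplus_{i=0}^b H^0(S_i;\F_p)\xrightarrow{\delta_0} H^1(G,\famS;\F_p)\xrightarrow{j_1} H^1(G;\F_p)\xrightarrow{r_1}\bigoplus_{i=0}^b H^1(S_i;\F_p)\xrightarrow{\delta_1} H^2(G,\famS;\F_p)\to 0,
\]
the final term being $0$ since $G$ is free. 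Here $r_0$ is the diagonal, $r_1$ is restriction, and $r_1$ is Pontrjagin-dual to the natural map $\iota_\ast\colon\bigoplus_i H_1(S_i;\F_p)\to H_1(G;\F_p)$. Since $\dim H^2(G,\famS;\F_p)=1$ and $\dim H^0(S_i;\F_p)=\dim H^1(S_i;\F_p)=1$, a dimension count along the sequence yields $\dim\im(\iota_\ast)=b$ and $\dim\ker(\iota_\ast)=1$; in particular $\rk G=\dim H_1(G;\F_p)\geq b$, so $\rk G=n+b$ with $n\geq 0$. Writing $\bar s_i\in H_1(G;\F_p)$ for the class of a chosen generator of $S_i$, the line $\ker(\iota_\ast)$ is spanned by the coefficient vector of a single relation $\sum_{i=0}^b\lambda_i\bar s_i=0$.

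The key step is that all $\lambda_i\neq 0$ (this is vacuous for $b=0$, so take $b\geq1$). As $\ker(\delta_0)$ is exactly the diagonal, $\delta_0(e_j)\neq 0$ for each standard basis vector $e_j$ of $\bigoplus_i H^0(S_i;\F_p)$. Non-degeneracy of the cup product $H^1(G,\famS;\F_p)\times H^1(G;\F_p)\to H^2(G,\famS;\F_p)$ then produces $x\in H^1(G;\F_p)=\Hom(G,\F_p)$ with $\delta_0(e_j)\cup x\neq 0$; but the compatibility of the cup product with the connecting homomorphisms of the pair gives $\delta_0(e_j)\cup x=\pm\,\delta_1\!\bigl(e_j\cdot r_1(x)\bigr)=\pm\,x(s_j)\,\delta_1(e_j)$, so $\delta_1(e_j)\neq 0$. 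Since $\ker(\delta_1)=\im(r_1)$ is the hyperplane cut out both by $y\mapsto\sum_i\lambda_i y_i$ (because $\sum_i\lambda_i\bar s_i=0$ forces $\sum_i\lambda_i x(s_i)=0$ for every $x$) and by $y\mapsto\sum_i\delta_1(e_i)y_i$, these functionals are proportional, whence $\lambda_j\neq 0$. Equivalently, $\delta_0$ followed by the duality isomorphism $H^1(G,\famS;\F_p)\iso H_1(G;\F_p)$ agrees with $\iota_\ast$ up to the Poincar\'e duality isomorphisms $H^0(S_i;\F_p)\iso H_1(S_i;\F_p)$ on the summands, so $\ker(\iota_\ast)$ is the image of the diagonal; i.e.\ $\lambda_0=\dots=\lambda_b$ up to scalars.

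The remainder is bookkeeping. Because all $\lambda_i\neq0$, replacing each $s_i$ with $i\geq1$ by a suitable $\Z[p]^\times$-power and keeping the given $s_0$ arranges $\bar s_0=\bar s_1+\dots+\bar s_b$ in $H_1(G;\F_p)$, which is the second bullet; and $\lambda_0\neq0$ makes $\bar s_1,\dots,\bar s_b$ linearly independent in $H_1(G;\F_p)$, so the images of $s_1,\dots,s_b$ in $G^{\mathrm{ab}}=\Z[p]^{n+b}$ span a rank-$b$ free direct summand $M$. Writing $G^{\mathrm{ab}}=M\oplus N$ and the image of $s_0$ as $\nu+\sum_{i=1}^b c_i\bar s_i$ with $\nu\in N$ and $c_i\in\Z[p]$, reduction modulo $p$ forces $\nu\in pN$ and $c_i\in\Z[p]^\times$, and replacing each $s_i$ by $s_i^{c_i}$ makes all $c_i=1$ while leaving the classes $\bar s_i\in H_1(G;\F_p)$ unchanged. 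Lifting any $\Z[p]$-basis $x_1,\dots,x_n$ of $N$ to $G$, the set $\{x_1,\dots,x_n,s_1,\dots,s_b\}$ reduces to a basis of $G/G^p[G,G]$ and hence is a basis of the free pro-$p$ group $G$; and $\zeta_i(s_0)$, which reads the $s_i$-coordinate of the image of $s_0$ in $N\oplus M$, equals $1$. The one substantive ingredient is the key step above---correctly invoking the compatibility of the cup product (equivalently, of the fundamental class under the connecting map) with the long exact sequence of the pair; everything else is formal.
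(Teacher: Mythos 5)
Your proof is correct and follows essentially the same route as the paper's: both run the long exact sequence of the pair with $\F_p$-coefficients, identify the one-dimensional kernel of $\bigoplus_i H_1(S_i;\F_p)\to H_1(G;\F_p)$ using the duality/cup-product structure, and then normalise the peripheral generators by $\Z[p]^{\!\times}$-powers. The only real difference is in the key step: the paper reads off that this kernel is exactly the diagonal from the square commuting with the duality isomorphisms $H^0(G)\iso H_2(G,\famS)$ and $H^0(S_i)\iso H_1(S_i)$, whereas you deduce the non-vanishing of each coefficient from non-degeneracy of the relative cup product together with its compatibility with the connecting maps (the same commutative pentagon the paper invokes in Theorem \ref{CappingOff1}) --- and you note the duality formulation as an equivalent alternative.
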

\begin{proof}
By the long exact sequence in relative cohomology and duality (Proposition 3.5, Theorem 5.6 and Proposition 5.7 of \cite{Wilkes17}) we have a commuting diagram (with coefficients in $\F_p$)
\[\begin{tikzcd}[column sep = large]
H^0(G) \ar[hookrightarrow]{r}{(1, \ldots, 1)} \ar{d}{\iso}& \bigoplus_{i=0}^b H^0(S_i) \ar{d}{\iso}& \\
H_2(G,\famS) \ar[hookrightarrow]{r}{}  & \bigoplus_{i=0}^b H_1(S_i)\ar{r} & H_1(G) 
\end{tikzcd} \]
whence $\bigoplus_{i=1}^b H_1(S_i)$ injects into $H_1(G)$. Therefore there is a basis \[{\cal B}=\{x_1,\ldots, x_n, s_1, \ldots, s_b\}\] of $G$ where $s_i$ is a generator of $S_i$ and $s_0 \equiv s_1\cdots s_b$ modulo $G^p[G,G]$. 

It follows that modulo $\overline{[G,G]}$ we have
 \[s_0\equiv \prod_{i=1}^n x_i^{p\mu_i} \cdot \prod_{i=1}^b s_i^{\lambda_i}\]  
where the $\lambda_i$ lie in $\Z[p]^{\!\times}$. Replacing each $s_i$ by the generator  $s_i^{\lambda_i}$ of $S_i$ we may guarantee that $\zeta_i(s_0)=1$ for all $i$.
\end{proof}
Note that the properties in the conclusion of the above theorem do not change if we replace each $s_i$ (and $S_i$) a conjugate of itself for $1\leq i\leq b$.

We will be needing a tool to identify the orientation character of a PD$^2$ pair. 
\begin{prop}\label{IdentifyChar}
Let $(G,\famS)$ be a PD$^n$ pair at the prime $p$ and let $\rho\colon G\to \U_p$. Then $\rho$ coincides with the orientation character $\chi$ of $(G,\famS)$ if and only if
\[|H^n(G,\famS; I_m(\rho))|= p^m\quad \text{for every $m\in \N$} \] 
\end{prop}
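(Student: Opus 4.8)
The plan is to compute $H^n(G,\famS;I_m(\rho))$ outright using the duality available for PD$^n$ pairs in top degree, which converts the question into an elementary statement about a cyclic module. Write $\psi=\chi\rho^{-1}\colon G\to\U_p$. The point to establish is that
\[H^n(G,\famS;I_m(\rho))\cong H_0\bigl(G,M_m(\psi)\bigr),\]
after which everything follows from the observation that, for any homomorphism $\psi\colon G\to\U_p$, the coinvariant group $M_m(\psi)_G=H_0(G,M_m(\psi))$ is the largest quotient of $\Z/p^m$ on which $G$ acts trivially, so it has order $p^m$ precisely when the induced map $G\to\Aut(\Z/p^m)$ is trivial, and order strictly less than $p^m$ otherwise.

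To obtain the displayed isomorphism I would apply the duality isomorphism $H^k(G,\famS;M)\cong H_{n-k}(G,\Z[p](\chi)\otimes M)$ with $k=n$ and $M=I_m(\rho)$ — legitimate since $I_m(\rho)$ is a finite $p$-primary $G$-module — and then unwind the coefficient module $\Z[p](\chi)\otimes I_m(\rho)$. Its underlying abelian group is $\Z[p]\otimes\Z/p^m\cong\Z/p^m$, and on a generator $e\otimes f_0$ an element $g\in G$ acts by $e\otimes f_0\mapsto\chi(g)e\otimes\rho(g)^{-1}f_0=\chi(g)\rho(g)^{-1}\,(e\otimes f_0)$, using that the Pontrjagin dual action on $I_m(\rho)=M_m(\rho)^\ast$ is multiplication by $\rho(g)^{-1}$. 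Hence $\Z[p](\chi)\otimes I_m(\rho)\cong M_m(\chi\rho^{-1})=M_m(\psi)$, as required.

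With this in hand the two implications are immediate. If $\rho=\chi$ then $\psi$ is trivial, $G$ acts trivially on each $M_m(\psi)=\Z/p^m$, and $|H^n(G,\famS;I_m(\rho))|=p^m$ for every $m$. Conversely, if $|H^n(G,\famS;I_m(\rho))|=p^m$ for every $m$, then $G$ acts trivially on $\Z/p^m$ via $\psi$ for every $m$; since $\Z[p]=\varprojlim_m\Z/p^m$ this forces $\psi(g)=\id$ for all $g$, i.e.\ $\rho=\chi$. The only delicate point is the module identification in the second paragraph: one must track the variance introduced by the Pontrjagin duals carefully, so that the character appearing is $\chi\rho^{-1}$ and not $\chi\rho$. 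As a consistency check one can instead run the computation through the other form of the duality, $H^n(G,\famS;M)\cong\bigl(\Hom(M,I_p(G,\famS))^G\bigr)^\ast$, for which $\Hom(I_m(\rho),I_p(G,\famS))\cong M_m(\rho\chi^{-1})$; this has $G$-invariants of order $p^m$ for all $m$ if and only if $\rho\chi^{-1}$ — equivalently $\chi\rho^{-1}$ — is trivial, in agreement with the computation above.
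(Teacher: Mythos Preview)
Your proof is correct and follows essentially the same approach as the paper: both apply top-degree duality to reduce the question to an elementary calculation in a cyclic $G$-module. The only difference is cosmetic---you use the homology form $H^n(G,\famS;M)\cong H_0(G,\Z[p](\chi)\otimes M)$ and compute coinvariants of $M_m(\chi\rho^{-1})$, whereas the paper uses the $\Hom$ form $H^n(G,\famS;M)^\ast\cong\Hom_G(M,I_p(G,\famS))$ and computes $\Hom_G(M_m(\chi),M_m(\rho))$, observing that if this has full size $p^m$ then it must contain the identity map, forcing $\chi\equiv\rho\pmod{p^m}$. Your consistency check at the end is in fact exactly the paper's route.
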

\begin{proof}
By duality we have
\begin{multline*}
H^n(G,\famS; I_m(\rho))^\ast \iso \Hom_G(I_m(\rho), I_p(G,\famS))=\\ \Hom_G(\Z[p](\chi), M_m(\rho)) = \Hom_G(M_m(\chi), M_m(\rho)) 
\end{multline*}
Now if $\chi=\rho$ then every group homomorphism from $\Z/p^m$ to itself is an element of $\Hom_G(M_m(\chi), M_m(\rho))$ so this group has size $p^m$. Conversely if the size is $p^m$ then the identity map on $\Z/p^m$ is an element of $\Hom_G(M_m(\chi), M_m(\rho))$, whence $\rho$ and $\chi$ agree modulo $p^m$. If they agree modulo $p^m$ for all $m$ then $\chi=\rho$.
\end{proof}

\subsection{Demushkin groups}
\begin{defn}
A pro-$p$ group $G$ is a {\em Demushkin group} if 
\begin{itemize}
\item $\dim H^1(G,\F_p)\leq \infty$
\item $\dim H^2(G,\F_p)=1$
\item the cup product $H^1(G,\F_p)\times H^1(G,\F_p)\to H^2(G,\F_p)$ is a non-degenerate bilinear form.
\end{itemize}
\end{defn}
The first two conditions may be rephrased as `$G$ is a finitely generated one-relator pro-$p$ group'. We call an element $r$ of a finitely generated free pro-$p$ group $F$ a {\em Demushkin word} if $F/\overline{\gpn{r}}$ is a Demushkin group.

Demushkin groups were classified by Demushkin \cite{demus61, demus63} (for $p\neq 2$), by Serre \cite{Serre62} (when $p=2$ and the rank of $G$ is odd) and by Labute \cite{labute67} for the remaining cases. The only Demushkin group which is not a PD$^2$ group is $\Z/2$, although some authors do not admit $\Z/2$ as a Demushkin group. The missing constraint to force $G$ to be PD$^2$ is that $G$ has cohomological dimension 2. This follows from the definition of a Demushkin group provided $G$ is infinite---see \cite[Section 9.1]{Serre62}. By convention we declare the orientation character of $\Z/2\Z$ to be the natural inclusion $\Z/2\to \gp{-1}\subseteq \U_2$. The classification may be stated as follows.
\begin{theorem}\label{DemusClass}
Let $F$ be a finitely generated free pro-$p$ group and let $r$ be a Demushkin word in $F$. Let $G=F/\overline{\gpn{r}}$ and let $\chi\colon G\to\U_p$ be the orientation character of $G$. Then there is a basis $x_1,\ldots,x_n$ of $F$ such that $r$ takes one of the following forms.
  \begin{itemize}
\item If $q\neq 2$ then $n$ is even and \[r= x_1^q[x_1,x_2]\cdots[x_{n-1}, x_n]\]
\item If $q=2$, $n$ is even and $\im(\chi)= \U_2^{[f]}$ for some $f\geq 2$ then \[r=x_1^{2+2^f}[x_1,x_2]\cdots[x_{n-1}, x_n]\]
\item If $q=2$, $n$ is even and $\im(\chi)= \gp{-1}\times\U_2^{(f)}$ for some $f\geq 2$ then \[r=x_1^2[x_1,x_2]x_3^{2^f}[x_3,x_4]\cdots[x_{n-1}, x_n]\]
\item If $q=2$, $n$ is odd and $\im(\chi)= \gp{-1}\times\U_2^{(f)}$ for some $f\geq 2$ then \[r=x_1^{2}x_2^{2^f}[x_2,x_3]\cdots[x_{n-1}, x_n]\]
\end{itemize}
Conversely all these forms of words are Demushkin words and the corresponding Demushkin groups have orientation characters as stated.
\end{theorem}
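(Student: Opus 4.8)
The plan is to take the combinatorial skeleton of this statement --- the existence of \emph{some} normal form for the relator $r$ --- from the classical Demushkin--Serre--Labute classification \cite{demus61,demus63,Serre62,labute67}, and to concentrate the genuine effort on (i) collapsing the possibilities into exactly the four shapes above, and (ii) attaching the correct orientation character to each. For (ii) the essential tool is the $\mathrm{PD}^2$-group analogue of Proposition \ref{IdentifyChar} (proved in exactly the same way, with absolute in place of relative duality): given a candidate $\rho\colon G\to\U_p$, I would present $G$ by the relevant displayed relator, build from that presentation the standard length-$2$ resolution $0\to\Zpof{G}\to\Zpof{G}^{\,n}\to\Zpof{G}\to\Z[p]\to 0$ in which the Fox derivatives $\partial r/\partial x_i$ of $r$ are the entries of the differential $\Zpof{G}\to\Zpof{G}^{\,n}$, and identify $H^2(G,I_m(\rho))$ with the cokernel of the map $I_m(\rho)^n\to I_m(\rho)$ induced by those Fox derivatives. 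Since $G$ acts on $I_m(\rho)$ through a character, each Fox derivative acts as a scalar in $\Z/p^m$, so this cohomology group is cyclic and the whole task reduces to checking its order is $p^m$ for every $m$ --- a direct computation with the abelianised Fox derivatives of each displayed word, controlled by the $p$-th power exponents occurring in it, and it is these power terms that force the four words to carry different characters. The same computation read forwards gives the converse assertion, that each of the four words is a Demushkin word with exactly the stated character.

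For the existence of the normal form I would follow the standard route. Write $G=F/\overline{\gpn{r}}$ with $F$ free pro-$p$ of rank $n=\dim H^1(G,\F_p)$; since $\dim H^2(G,\F_p)=1$ there is a single relator, which must lie in $F_2:=F^p\overline{[F,F]}$, for otherwise $G$ would need fewer than $n$ generators. Equip $F$ with its lower $p$-central series $F=F_1\supseteq F_2\supseteq\cdots$, $F_{j+1}=F_j^p\overline{[F_j,F]}$. Relative to the basis of $H^1(G,\F_p)=H^1(F,\F_p)$ dual to a basis $x_1,\dots,x_n$ of $F$, the cup product $H^1\times H^1\to H^2=\F_p$ is the bilinear form whose off-diagonal $(i,j)$-entry is the exponent of $[x_i,x_j]$ in $r$ modulo $F_3$ and whose diagonal --- nonzero only when $p=2$ --- records the squares occurring in $r$ modulo $F_3$; the Demushkin hypothesis is that this form is non-degenerate. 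I would use this to normalise $r$ modulo $F_3$ by linear algebra over $\F_p$. For $p$ odd the form is alternating, so $n$ is even, and a symplectic basis change splits it into hyperbolic planes, leaving $r\equiv x_1^{\varepsilon}[x_1,x_2]\cdots[x_{n-1},x_n]\pmod{F_3}$ with $\varepsilon\in\{0,p\}$. For $p=2$ the squaring map $F/F_2\to F_2/F_3$, $xF_2\mapsto x^2F_3$, fails to be linear --- the very phenomenon that keeps $\grj[]$ from being a Lie algebra in degree $1$ --- so the square-terms and commutator-terms of $r\bmod F_3$ are coupled; disentangling them still yields $r\equiv x_1^{\varepsilon}[x_1,x_2]\cdots[x_{n-1},x_n]\pmod{F_3}$ with $\varepsilon\in\{0,2\}$, but now $n$ may be odd (precisely when $\varepsilon=2$ and the form is non-alternating of odd rank). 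In every case the exact power exponent $q$ --- and, when $p=2$, $q=2$, $n$ even, which of the two admissible images of $\chi$ actually occurs --- is pinned down only later, during the lift.

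What remains is to lift each mod-$F_3$ normal form to an exact identity in $F$, and here I would argue by successive approximation: supposing $r$ agrees with the target word $w$ modulo $F_m$ for some $m\ge 3$, I would produce an automorphism of $F$ fixing everything modulo $F_m$ and carrying $r$ to a word agreeing with $w$ modulo $F_{m+1}$, then iterate and pass to the inverse limit (extracting the true exponent $q$ en route). The obstruction to each inductive step lives in $F_m/F_{m+1}$, and its vanishing is a second, deeper consequence of the non-degeneracy of the cup product, reformulated (after Serre and Labute) as the statement that the relation module $N/\overline{[N,N]}$ is the cyclic $\Zpof{G}$-module $\Z[p](\chi)$ and that $\grj[]$ is the free graded Lie algebra modulo the ideal generated by the degree-$2$ image of $r$. \textbf{The hard part is exactly this lifting, and within it the case $p=2$ with $n$ even} --- the case which forced Labute's paper beyond Demushkin's (odd $p$) and Serre's ($p=2$, odd rank): there the squaring map on $\grj[]$ genuinely interferes with the bracket, the orientation character enters the normal form in an essential way, and keeping the coupled quadratic--bilinear data reduced under the approximating automorphisms is substantially more intricate. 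Once the normal forms are in hand, the character computation of the first paragraph closes both the theorem and its converse.
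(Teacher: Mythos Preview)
The paper does not prove Theorem~\ref{DemusClass} at all: it is stated as a quotation of the classical classification, with attribution to Demushkin \cite{demus61,demus63}, Serre \cite{Serre62}, and Labute \cite{labute67}, and no argument is given. So there is no ``paper's own proof'' to compare against; the theorem functions purely as background input, and the paper's genuine work (Theorems~\ref{CappingOff1}, \ref{CappingOff2}, \ref{MainThm}) takes it as a black box.

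Your sketch is a faithful outline of the Demushkin--Serre--Labute argument itself: normalise $r$ modulo $F_3$ using non-degeneracy of the cup product, then lift by successive approximation through the lower $p$-central series, invoking Labute's analysis of $\gr(G)$ to show the obstruction at each stage can be killed, with the $p=2$ even-rank case singled out as the delicate one. The plan to identify the orientation character via the absolute analogue of Proposition~\ref{IdentifyChar} and a Fox-derivative computation of $H^2(G,I_m(\rho))$ is also standard and correct. As a proof proposal there is no gap, but you should be aware that you are reproducing the cited literature rather than anything the present paper does; in particular, the successive-approximation machinery you describe is exactly what the paper later \emph{imitates} (citing \cite[Proposition~5]{labute67}) in its proof of Theorem~\ref{MainThm}.
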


\subsection{Actions on trees}
Let us very briefly recall some background on actions of groups on trees, in both its discrete and pro-$p$ variants. More complete discussions may be found in \cite{Serre03} for the classical theory and in \cite{RZ00p,Ribes17} for the pro-$p$ theory. We will give as much of a unified theory as possible. Let $\mathfrak{C}$ be either the category of discrete groups or the category of pro-$p$ groups.

\begin{defn}
The {\em free product} (in $\mathfrak{C}$) of groups $G_1,\ldots, G_n\in\mathfrak{C}$ is a group $G\in\mathfrak{C}$ with maps $i_k\colon G_k\to G$ and the universal property that for every $H\in\mathfrak{C}$ and all maps $f_k\colon G_k\to H$ there is a unique $f\colon G\to H$ such that $fi_k = f_k$ for all $k$. 
\end{defn}
The free product for discrete groups is denoted by $\ast$, and the free pro-$p$ product by $\amalg$.
\begin{defn}
A {\em finite graph of groups} $(X,G_\bullet)$ in $\mathfrak{C}$ consists of a finite (directed) graph $X$, a group $G_x\in\mathfrak{C}$ for each $x\in X$, and monomorphisms $\bdy_i\colon G_e\to G_{d_i(e)}$ for $i=0,1$ and all edges $e\in EX$, where $d_0(e)$ and $d_1(e)$ are  the start and end vertices of $e$.

The {\em fundamental group} of $(X,G_\bullet)$ is the group formed as follows. Choose a maximal subtree $T$ of $X$. Take the free group $F$ in $\mathfrak{C}$ on the basis $\{t_e\mid e\in EX\}$ (that is, the free discrete group or free pro-$p$ group respectively). Form the free product of $F$ with all the vertex groups $G_v$ $(v\in VX)$ and factor out the (closed) normal subgroup generated by the relations:
\begin{itemize}
\item $t_e=1$ for all $e\in T$
\item $\bdy_1(g)= t_e^{-1} \bdy_0(g) t_e$ for $g\in G_e$
\end{itemize}

A graph of groups is {\em proper} if the natural maps from $G_x$ to the fundamental group are injective for all $x\in X$.
\end{defn}
The fundamental group is denoted by $\pi_1(X,G_\bullet)$ for discrete groups and by $\Pi_1(X,G_\bullet)$ for pro-$p$ groups. A `splitting' of a group $G$ will mean a proper finite graph of groups with fundamental group $G$. All graphs of discrete groups are proper. See \cite[Chapter 9]{RZ00} for more information on properness.

Let ${\cal G}=(X,G_\bullet)$ be a graph of discrete groups with finite base graph $X$ and vertex and edge groups $G_v$ and $G_e$ respectively. Let $G$ be the fundamental group of this graph of groups, denoted $\pi_1({\cal G})$ or $\pi_1(X,G_\bullet)$. There is a standard tree (the {\em Bass-Serre tree dual to $(X,G_\bullet)$}) $T=S(\cal G)$ on which $G$ acts, constructed as follows: the vertex (respectively, edge) set of $T$ consists of cosets of the vertex (respectively, edge) groups $G_x$ in $G$; that is, 
\[V(T) = \coprod_{x\in V(X)} G/G_x,\quad E(T) = \coprod_{e\in E(X)} G/G_e \]
with the obvious incidence maps given by inclusions $gG_e\subseteq gG_x$ when $x$ is an endpoint of $e$. Vertex stabilisers for the action of $G$ on $T$ are conjugates of the $G_x$, and the quotient graph $G\backslash T$ is $X$.

Similarly, given a graph of pro-$p$ groups $\proP{\cal G}=(X,\Gamma_\bullet)$ with fundamental group $\Gamma=\Pi_1(\proP{\cal G})=\Pi_1(X,\Gamma_\bullet)$, there is a `standard tree' $S(\proP{\cal G})$ with precisely the same formal definition as above. This is a topological graph which is a `pro-$p$ tree' in a homological sense. Again the quotient graph is $X$ and vertex stabilisers have the expected forms. 

These closely related notions for discrete and pro-$p$ groups interact in the following manner. Let $\Gamma$ be a discrete group. The {\em pro-$p$ topology} on $\Gamma$ is the topology whose neighbourhood basis at the identity consists of normal subgroups $N$ of $\Gamma$ with $[\Gamma:N]$ a power of $p$. A subset $S$ of $\Gamma$ is {\em $p$-separable} in $\Gamma$ if $S$ is closed in the pro-$p$ topology. For $\Delta$ a subgroup of $\Gamma$, we say that $\Gamma$ {\em induces the full pro-$p$ topology on $\Delta$}, if the induced topology on $\Delta$ agrees with its pro-$p$ topology. 
\begin{defn}
A graph of discrete groups ${\cal G}=(X,\Gamma_\bullet)$ is {\em $p$-efficient} if $\Gamma=\pi_1(\cal G)$ is residually $p$, each group $\Gamma_x$ is closed in the pro-$p$ topology on $\Gamma$, and $\Gamma$ induces the full pro-$p$ topology on each $\Gamma_x$.
\end{defn}
A $p$-efficient graph of discrete groups gives rise to a proper graph of pro-$p$ groups by taking the pro-$p$ completion $G_x$ of each $\Gamma_x$. In this case the fundamental group of the graph of pro-$p$ groups $(X,G_\bullet)$ is the same as the pro-$p$ completion of the fundamental group of the original graph of discrete groups $(X,\Gamma_\bullet)$. From the explicit definitions given above and the definition of $p$-efficiency, one map see that the Bass-Serre tree $T^{\rm abs}$ dual to the graph of discrete groups $(X,\Gamma_\bullet)$ naturally embeds in the pro-$p$ tree $T$ dual to the graph of pro-$p$ groups $(X,G_\bullet)$.

\section{Capping off peripheral subgroups}
We will attack the classification problem by reducing the number of boundary components in order to make use of the classification of Demushkin groups as a base case. 

We will need the following basic proposition concerning non-degeneracy of bilinear forms. The proof is an exercise in undergraduate algebra.
\begin{prop}\label{BilFormsAndDirSums}
Let $V$ and $W$ be vector spaces over a field $F$, and let $\fkB\colon V\times W \to F$ be a bilinear form. Assume that we have direct sum decompositions 
\[V=V'\oplus V'', \quad W=W'\oplus W'' \]
and that $\fkB(v',w'')=0$ for all $v'\in V', w''\in W''$. Then $\fkB$ is non-degenerate if and only if the two bilinear forms 
\[\fkB'\colon V'\times W'\to F, \quad \fkB''\colon V''\times W''\to F \]
obtained by restriction from \fkB\ are non-degenerate.
\end{prop}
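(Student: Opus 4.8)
The plan is to exploit the block-triangular structure that the hypothesis $\fkB(v',w'')=0$ imposes on $\fkB$. Writing $v=v'+v''$ and $w=w'+w''$ with $v'\in V'$, $v''\in V''$, $w'\in W'$, $w''\in W''$, bilinearity together with the hypothesis gives
\[\fkB(v,w)=\fkB'(v',w')+\fkB(v'',w')+\fkB''(v'',w''),\]
so in bases adapted to the two direct sum decompositions the matrix of $\fkB$ is block lower-triangular, with diagonal blocks the matrices of $\fkB'$ and $\fkB''$ and lower-left block the matrix of the restriction $\fkB|_{V''\times W'}$. Since the spaces occurring in the applications are finite-dimensional $\F_p$-vector spaces with $\dim V'=\dim W'$ and $\dim V''=\dim W''$ (as holds in the applications), I would prove the proposition under those assumptions, where ``non-degenerate'' simply means ``invertible matrix''; then the statement reduces to the standard fact that a block-triangular matrix with square diagonal blocks is invertible if and only if each diagonal block is.

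Spelled out without coordinates: for the ``if'' direction, assume $\fkB'$ and $\fkB''$ are non-degenerate and take a nonzero $v=v'+v''\in V$. If $v''\neq 0$, pick $w''\in W''$ with $\fkB''(v'',w'')\neq 0$; then $\fkB(v,w'')=\fkB(v',w'')+\fkB''(v'',w'')=\fkB''(v'',w'')\neq 0$, using $\fkB(v',w'')=0$. If $v''=0$, then $v'\neq 0$ and $\fkB(v,w')=\fkB'(v',w')\neq 0$ for a suitable $w'\in W'$. The symmetric argument (using $\fkB'$ when the $W'$-component of $w$ is nonzero and $\fkB''$ otherwise) shows every nonzero $w\in W$ pairs nontrivially with some element of $V$, so $\fkB$ is non-degenerate. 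For ``only if'', assume $\fkB$ is non-degenerate: if $v'\in V'$ has $\fkB'(v',w')=0$ for all $w'\in W'$, then $\fkB(v',w)=\fkB'(v',w')+\fkB(v',w'')=0$ for every $w=w'+w''$, so $v'=0$; hence the left radical of $\fkB'$ vanishes, and dually the right radical of $\fkB''$ vanishes.

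The step I expect to be the main obstacle is completing this last direction. The vanishing of the remaining two ``transverse'' radicals---the right radical of $\fkB'$ and the left radical of $\fkB''$---does \emph{not} follow from the triangular shape by itself, since the off-diagonal block $\fkB|_{V''\times W'}$ can obstruct it, so some input beyond non-degeneracy of $\fkB$ is genuinely needed here. Invoking finite-dimensionality and $\dim V'=\dim W'$, $\dim V''=\dim W''$, however, a bilinear form on two finite-dimensional spaces of equal dimension with vanishing left (equivalently, right) radical is automatically non-degenerate; so the partial statements of the previous paragraph upgrade to full non-degeneracy of $\fkB'$ and $\fkB''$, completing the proof.
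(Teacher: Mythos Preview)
The paper does not actually prove this proposition; it simply declares ``The proof is an exercise in undergraduate algebra.'' Your argument is correct and is the natural one. More to the point, you are right to flag that the ``only if'' direction is \emph{false} as literally stated: take $V=W=F^2$ with the standard pairing, $V'=Fe_1$, $V''=Fe_2$, $W'=W$, $W''=0$; then $\fkB(V',W'')=0$ trivially and $\fkB$ is non-degenerate, yet $\fkB'$ has nontrivial right radical $Fe_2$ and $\fkB''$ has nontrivial left radical $V''=Fe_2$. The extra hypotheses you supply---finite dimension together with $\dim V'=\dim W'$ (whence $\dim V''=\dim W''$ once $\fkB$ is non-degenerate)---are precisely what is needed to rescue that direction, and they hold in the sole application (Theorem~\ref{CappingOff1}), where $V'=H^0(S_b;\F_p)$ and $W'=H^1(S_b;\F_p)$ are both one-dimensional. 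Your proposal is therefore correct and in fact sharpens the paper's statement.
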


We will first deal with the case $b\geq 1$ as this only involves relative cohomology. 
\begin{theorem}\label{CappingOff1}
Let $G$ be a free pro-$p$ group of rank $n+b$ where $n\geq 0, b\geq 1$ and let $\famS=\{S_i\}_{i=0}^b$ be a family of cyclic subgroups of $G$. Suppose there exist generators $s_i$ for the $S_i$ such that 
\begin{enumerate}[(1)]
\item $s_1,\ldots, s_b$ freely generate a free factor of $G$ and
\item $s_0 \equiv s_1\cdots s_b$ modulo $G^p[G,G]$.
\item $S_i\cap\overline{\gpn{S_b}}=\{1\}$ for $i\neq b$
\end{enumerate}
Define $\overline{G}$ to be the quotient of $G$ by the normal subgroup generated by $S_b$, let $\phi\colon G\to \overline{G}$ be the quotient map and let $\overline{S}_i=\phi(S_i)$ for $0\leq i<b$. Let $\overline{\famS}=\{\overline{S}_i \}_{i=0}^{b-1}$. Note that $(\overline{G}, \overline{\famS})$ satisfies the conditions (1) and (2) above.

Then $(G,\famS)$ is a PD$^2$ pair if and only if $(\overline{G}, \overline{\famS})$ is a PD$^2$ pair. When they are both PD$^2$ pairs then their orientation characteristics $\chi, \overline{\chi}$ satisfy $\chi= \overline{\chi}\phi$.
\end{theorem}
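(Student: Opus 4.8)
The plan is to exploit the free-product decomposition of $G$ supplied by conditions (1)--(2) and to compare the relative cohomology of $(G,\famS)$ with that of $(\overline G,\overline\famS)$ directly through the long exact sequences. First I would dispose of the structural remarks. Since $s_1,\dots,s_b$ generate a free factor, $G=L\amalg\langle s_1\rangle\amalg\cdots\amalg\langle s_b\rangle$ with $L$ free of rank $n$; setting $K=L\amalg\langle s_1\rangle\amalg\cdots\amalg\langle s_{b-1}\rangle$ gives $G=K\amalg S_b$, so killing $S_b$ identifies $\overline G$ with $K$ (hence $\overline G$ is free of rank $n+b-1$) and the inclusion $K\hookrightarrow G$ splits $\phi$. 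Condition (3) makes $\phi|_{S_i}$ injective for $i<b$, so $\overline S_i\cong\Z[p]$, and pushing the two congruences of the hypothesis through $\phi$ yields conditions (1)--(2) for $(\overline G,\overline\famS)$.

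The technical core is the statement that $\phi^\ast\colon H^2(\overline G,\overline\famS;M)\to H^2(G,\famS;M)$ is an isomorphism for every finite $p$-primary $\overline G$-module $M$ (regarded as a $G$-module through $\phi$). Since $G$ and $\overline G$ are free, the relative long exact sequences identify $H^2(G,\famS;M)$ with $\operatorname{coker}\bigl(H^1(G;M)\xrightarrow{\res}\bigoplus_{i=0}^{b}H^1(S_i;M)\bigr)$, and similarly for $(\overline G,\overline\famS)$. The Mayer--Vietoris sequence of $G=K\amalg S_b$ gives $H^1(G;M)=\phi^\ast H^1(\overline G;M)\oplus C$ with $\phi^\ast H^1(\overline G;M)=\ker(\res_{S_b})$ and $\res_{S_b}$ onto; as $\res_{S_b}$ is precisely the $S_b$-coordinate of the restriction map, the cokernel can be computed after restricting to $\phi^\ast H^1(\overline G;M)$, and the isomorphisms $\phi|_{S_i}\colon S_i\xrightarrow{\sim}\overline S_i$ ($i<b$) turn this into $\operatorname{coker}\bigl(H^1(\overline G;M)\to\bigoplus_{i=0}^{b-1}H^1(\overline S_i;M)\bigr)=H^2(\overline G,\overline\famS;M)$, the identification being $\phi^\ast$. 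The same diagram chase (over $\F_p$) shows $\phi^\ast$ is injective on $H^1(\overline G,\overline\famS;\F_p)$ and that $H^0$ of both pairs vanishes.

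With this in place the PD$^2$ equivalence reduces to a single cup-product check. Finiteness of the $H^k(\cdot;\F_p)$ is automatic, and $\dim H^2(\cdot;\F_p)=1$ transfers across $\phi^\ast$. Consider the form $\fkB\colon H^1(G,\famS;\F_p)\times H^1(G;\F_p)\to H^2(G,\famS;\F_p)$. Let $\sigma_b\in H^1(G;\F_p)$ be the functional with $\sigma_b(s_b)=1$ vanishing on $x_1,\dots,x_n,s_1,\dots,s_{b-1}$, so $H^1(G;\F_p)=\langle\sigma_b\rangle\oplus\phi^\ast H^1(\overline G;\F_p)$, and let $\beta=\delta(e_b)\in H^1(G,\famS;\F_p)$ be the connecting image of a generator of the $S_b$-summand of $\bigoplus_{i=0}^{b}H^0(S_i;\F_p)$. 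Using compatibility of the connecting map with cup products, in the form $\delta(e)\cup v=\pm\delta(e\cup\res(v))$, I would verify: (i) $\beta\cup\phi^\ast H^1(\overline G;\F_p)=0$, since $\res_{S_b}\circ\phi^\ast=0$; and (ii) $\beta\cup\sigma_b\neq0$, since it is represented by $\pm(0,\dots,0,\res_{S_b}\sigma_b)$ in $\operatorname{coker}(\res)=H^2(G,\famS;\F_p)$ and this tuple lies outside $\im(\res)$ — any homomorphism killing $S_0,\dots,S_{b-1}$ kills $s_0\equiv s_1\cdots s_b$ and hence $s_b$. From (ii), $\beta\notin\phi^\ast H^1(\overline G,\overline\famS;\F_p)$; combining this with injectivity of $\phi^\ast$ and the dimension identity $\dim H^1(G,\famS;\F_p)=\dim H^1(\overline G,\overline\famS;\F_p)+1$ (Euler characteristics of the two long exact sequences together with the $H^2$-isomorphism) gives $H^1(G,\famS;\F_p)=\langle\beta\rangle\oplus\phi^\ast H^1(\overline G,\overline\famS;\F_p)$. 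Applying Proposition \ref{BilFormsAndDirSums} to $\fkB$ with these two decompositions: the orthogonality hypothesis is exactly (i), the $\langle\beta\rangle\times\langle\sigma_b\rangle$ block is nondegenerate by (ii), and — by naturality of the relative cup product together with the $H^2$-isomorphism — the $\phi^\ast(-)\times\phi^\ast(-)$ block is precisely the cup-product form of $(\overline G,\overline\famS)$; hence $\fkB$ is nondegenerate iff the latter is, which is the equivalence. Finally, when both are PD$^2$, Proposition \ref{IdentifyChar} finishes it: $\overline\chi\circ\phi$ factors through $\phi$, so $I_m(\overline\chi\circ\phi)$ is the pullback along $\phi$ of the $\overline G$-module $I_m(\overline\chi)$, whence $|H^2(G,\famS;I_m(\overline\chi\circ\phi))|=|H^2(\overline G,\overline\famS;I_m(\overline\chi))|=p^m$ for every $m$, forcing $\chi=\overline\chi\circ\phi$.

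The step I expect to be the real obstacle is choosing the right pair of direct-sum decompositions for Proposition \ref{BilFormsAndDirSums} — one must pair the new relative class $\beta$ against $\sigma_b$ and the pulled-back classes against one another, since the opposite grouping fails the orthogonality hypothesis — and then carrying out the cup-product computations (i) and (ii) through the connecting homomorphism of the relative sequence; it is precisely in (ii) that the relation $s_0\equiv s_1\cdots s_b$ is genuinely needed.
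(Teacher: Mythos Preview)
Your argument is correct and follows essentially the same route as the paper: split $H^1(G,\famS;\F_p)$ and $H^1(G;\F_p)$ into an ``$S_b$-part'' and a ``$\overline G$-part'', verify the block structure of the cup product, and invoke Proposition~\ref{BilFormsAndDirSums}; then use Proposition~\ref{IdentifyChar} for the orientation characters. Your identity $\delta(e)\cup v=\pm\delta(e\cup\res v)$ is exactly the content of the commutative pentagon the paper cites from \cite{Wilkes17}, so computations (i) and (ii) match the paper's verification. One genuine improvement in your write-up is the orientation-character step: by proving the $H^2$-isomorphism for \emph{arbitrary} finite $p$-primary $\overline G$-modules $M$ via the Mayer--Vietoris/cokernel description, you get $\chi=\overline\chi\phi$ in one line from Proposition~\ref{IdentifyChar}, whereas the paper instead invokes excision and an auxiliary commutative diagram to obtain the same conclusion.

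There is one small slip. The inference ``From (ii), $\beta\notin\phi^\ast H^1(\overline G,\overline\famS;\F_p)$'' does not follow from (ii) alone: (ii) only tells you $\beta\cup\sigma_b\neq 0$, and you have not shown that $\phi^\ast(\gamma)\cup\sigma_b=0$ for all $\gamma$ (indeed $\sigma_b$ is not in the image of $\phi^\ast$, so naturality does not apply directly). The statement is nonetheless true and is the content of the paper's explicit splitting $H^1(G,\famS)=H^0(S_b)\oplus H^1(\overline G,\overline\famS)$. The clean justification is the diagram chase you already allude to: in the commuting ladder of long exact sequences for $\phi$, if $\beta=\phi^\ast\gamma$ then $\gamma$ has trivial image in $H^1(\overline G)$, hence $\gamma=\overline\delta(\eta)$ for some $\eta\in\bigoplus_{i<b}H^0(\overline S_i)$; then $e_b-\iota(\eta)$ lies in the diagonal image of $H^0(G)$, which forces its $S_b$-coordinate to equal its $S_0$-coordinate, a contradiction since $b\geq 1$. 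With this in place your decomposition and the application of Proposition~\ref{BilFormsAndDirSums} go through as written.
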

\begin{rmk}
The assumption that $S_i\cap\overline{\gpn{S_b}}=\{1\}$ for $i\neq b$ is equivalent to $S_i\not\subseteq\overline{\gpn{S_b}}$ since $\overline{G}$ is free, hence torsion-free. This is automatic for $b>1$ by the conditions (1) and (2). For the case when $b=1$ and $S_0\subseteq \overline{\gpn{S_1}}$ see Proposition \ref{TwoPeripheralsTheSame}.
\end{rmk}
\begin{proof}
Extend $s_1,\ldots, s_b$ to a free basis $(x_1,\ldots, x_n, s_1, \ldots s_b)$ for $G$ and take $(x_1,\ldots, x_n, s_1, \ldots s_{b-1})$ as basis for $\overline{G}$. There is a map of pairs $(G,\famS)\to (\overline{G},\overline{\famS})$ induced by $\phi$ where we regard $S_b$ as sent to $1\in S_{b-1}$. This gives a commutative diagram of long exact sequences in relative cohomology with $\F_p$-coefficients. The set-up of the theorem and the fact that the coefficients are a field allow one to easily write down all the groups and maps in this sequence explicitly. In particular one finds that the  natural map
\[\F_p\iso H^2(\overline{G},\overline{\famS}) \stackrel{\iso}{\longrightarrow} H^2(G,\famS) \iso \F_p \]
is an isomorphism and that there are splittings
\[H^1(G,\famS) = H^0(S_b)\oplus H^1(\overline{G},\overline{\famS}), \quad H^1(G)=H^1(S_b)\oplus H^1(\overline{G}) \]
Here the maps from $H^0 (S_b)$ and $H^1(\overline{G},\overline{\famS})$ to $H^1(G,\famS)$ are the natural maps from the commutative diagram and the maps from $H^1(\overline{G})$ and $H^1(S_b)$ to $H^1(G)$ are respectively induced by $\phi$ and by the retraction $G\to S_b$ which kills all the elements in our chosen basis other than $s_b$.

Since cup products are natural with respect to maps of pairs \cite[Proposition 3.1]{Wilkes17}, the cup product on $H^1(\overline{G},\overline{\famS})\otimes H^1(\overline{G})$ is the restriction of the cup product on $H^1(G,\famS)\otimes H^1(G)$. Furthermore Equation 3.6 in Section 3.2 of \cite{Wilkes17} gives a commutative pentagon
\[\begin{tikzcd}
H^0(S_b)\otimes H^1(G)\ar{rr}\ar[hookrightarrow]{d} && H^0(S_b)\otimes H^1(S_b) \ar{d}{\smile}\\
H^1(G,\famS)\otimes H^1(G) \ar{rd}{\smile} && H^1(S_b)\ar{ld}{\iso}\\
& H^2(G,\famS) &
 \end{tikzcd}\]
This shows firstly that the restriction of the cup product on $H^1(G,\famS)\otimes H^1(G)$ to $H^0(S_b)\otimes H^1(S_b)$ via the direct sum decompositions above agrees with the cup product on $S_b$ and that the cup product of an element of $H^0(S_b)$ with an element of $H^1(\overline{G})$ vanishes. We may now apply Proposition \ref{BilFormsAndDirSums} to conclude that $(G,\famS)$ is a PD$^2$ pair if and only if $(\overline{G},\overline{\famS})$ is a PD$^2$ pair.

We move on to consideration of the orientation characters. Since $\chi(S_b)=1$ there is a unique map $\rho\colon\overline{G}\to\U_p$ such that $\chi=\rho\phi$. We must show that $\rho$ is the orientation character of $\overline{G}$. Let $m$ be a natural number. Note that our choices of bases give a splitting $G=\overline{G}\,\amalg\, S_b$. Let $\phi^!\colon \overline{G}\to G$ be the inclusion map and note that $\phi\phi^!=\id_{\overline{G}}$. Note also that $\chi|_{\overline{G}}=\rho$. By excision \cite[Theorem 4.8]{Wilkes17} the map $\phi^!$ induces an isomorphism $H^1(G, S_b; I_m(\chi))\to H^1(\overline{G},1; I_m(\rho))$. Therefore since $\phi\phi^!$ is the identity, $\phi$ induces an isomorphism in the other direction. Let $\famS'=\{S_0\ldots S_{b-1}\}$. The map of pairs $(G,\famS')\to (\overline{G},\overline{\famS})$ and Propositions 2.4 and 2.6 of \cite{Wilkes17} give a commutative diagram (with coefficients in $I_m(\chi)$ on the top row and $I_m(\rho)$ on the bottom row):
\[ \begin{tikzcd}
H^1(G,S_b)\ar{r} & H^1(\famS') \ar{r} & H^2(G,\famS) \ar{r} & 0\\
H^1(\overline{G},1) \ar{u}{\iso} \ar{r} & H^1(\overline{\famS})\ar{u}{\iso} \ar{r} & H^2(\overline{G},\overline{\famS}\sqcup 1) \ar{r}\ar{u} & 0
\end{tikzcd} \]   
which demonstrates that 
\[|H^2(\overline{G},\overline{\famS}; I_m(\rho))| = |H^2(\overline{G}, \overline{\famS}\sqcup 1; I_m(\rho))| = |H^2(G,\famS; I_m(\chi))|\]
and we are done by Proposition \ref{IdentifyChar}.
\end{proof}
\begin{prop}\label{TwoPeripheralsTheSame}
Let $G$ be a free pro-$p$ group of rank $n+1$ where $n\geq 0$ and let $\famS=\{S_0, S_1\}$ be a family of cyclic subgroups of $G$. Suppose there exist generators $s_i$ for the $S_i$ such that 
\begin{enumerate}[(1)]
\item $s_1$ generates a free factor of $G$ and
\item $s_0 \equiv s_1$ modulo $G^p[G,G]$.
\end{enumerate}
Assume further that $S_0\cap \overline{\gpn{S_1}}\neq 1$. If $(G,\famS)$ is a PD$^2$ pair then $n=0$, hence $G=S_0=S_1$. Conversely if $n=0$ and $G=S_0=S_1$ then $(G,\famS)$ is a PD$^2$ pair.
\end{prop}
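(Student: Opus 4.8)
The plan is to handle the two directions separately, the converse of the stated implication (that $n=0$, $G=S_0=S_1$ yields a PD$^2$ pair) being a short computation. When $n=0$ we have $G=\Z[p]$; since $s_1$ generates a free factor of this rank-one group, $S_1=G$, and since $s_0\equiv s_1$ modulo the Frattini subgroup $G^p[G,G]$ the element $s_0$ also generates $G$, so $S_0=G$. Then the defining sequence of the pair reads $0\to\Delta_{G,\famS}\to\Zpof{G/G}\oplus\Zpof{G/G}\to\Z[p]\to 0$ with the right-hand map the sum of the two identity augmentations, so $\Delta_{G,\famS}\iso\Z[p]$ with trivial action and $H^{k+1}(G,\famS;A)=H^k(G;A)$ for all $k$ and all $A$. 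Hence $H^1(G,\famS;\F_p)\iso H^2(G,\famS;\F_p)\iso\F_p$ and $H^k(G,\famS;\F_p)=0$ for $k\ne 1,2$, so the dimension conditions hold; the only cup product needing attention, $H^1(G,\famS;\F_p)\times H^1(G;\F_p)\to H^2(G,\famS;\F_p)$, is non-zero, hence non-degenerate, by the commutative pentagon of \cite[Section 3.2]{Wilkes17} exactly as in the proof of Theorem \ref{CappingOff1}: a generator of $H^1(G,\famS;\F_p)$ is the image of $(1,0)\in H^0(S_0)\oplus H^0(S_1)$ under the connecting map, its product with a generator $u$ of $H^1(G;\F_p)$ is the image of $(\res_{S_0}u,0)$ under the connecting map $H^1(S_0)\oplus H^1(S_1)\to H^2(G,\famS)$, and this is non-zero because $\res_{S_0}u\ne 0$ while the kernel of that connecting map is the diagonal. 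So $(G,\famS)$ is a PD$^2$ pair. (Equivalently, $(\Z[p],\{\Z[p],\Z[p]\})$ is the pro-$p$ completion of the $p$-efficient discrete PD$^2$ pair of the annulus.)

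For the forward implication, suppose $(G,\famS)$ is a PD$^2$ pair. Since $s_1$ generates a free factor, write $G=\overline G\amalg S_1$ with $\overline G$ free pro-$p$ of rank $n$ and let $\phi\colon G\to\overline G$ be the retraction killing $s_1$, so that $\ker\phi=\overline{\gpn{S_1}}$. Exactly as in the Remark after Theorem \ref{CappingOff1}, the hypothesis $S_0\cap\overline{\gpn{S_1}}\ne 1$ puts a non-trivial power of $s_0$ into $\ker\phi$, and as $\overline G$ is torsion-free this forces $\phi(S_0)=1$, i.e.\ $S_0\subseteq\overline{\gpn{S_1}}$. Hence $\phi$, together with the identity on index sets, is a map of pairs $\psi\colon(G,\{S_0,S_1\})\to(\overline G,\{1,1\})$, where $\{1,1\}$ denotes the family of two copies of the trivial subgroup.

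The heart of the argument is to play $(G,\famS)$ off against this "fully capped" pair $(\overline G,\{1,1\})$, using that $\overline G$ is free so $H^k(\overline G;\F_p)=0$ for $k\ge 2$; the long exact sequence of $(\overline G,\{1,1\})$ then gives $H^2(\overline G,\{1,1\};\F_p)=0$. Comparing the long exact sequences of $(G,\famS)$ and of $(\overline G,\{1,1\})$ along $\psi$, a short diagram chase (a four-lemma) shows that $\psi^*\colon H^1(\overline G,\{1,1\};\F_p)\to H^1(G,\famS;\F_p)$ is surjective: the vertical maps in degree $0$ are isomorphisms, $H^1$ of each trivial peripheral subgroup vanishes, and $\phi^*\colon H^1(\overline G;\F_p)\to H^1(G;\F_p)$ is injective with image $\ker(\res_{S_0},\res_{S_1})=\{u:u(s_1)=0\}$ — the two restrictions have this common kernel because $s_0\equiv s_1$ modulo $G^p[G,G]$, and it coincides with $\phi^*H^1(\overline G;\F_p)$ since a homomorphism $G\to\F_p$ kills $\overline{\gpn{S_1}}=\ker\phi$ iff it kills $s_1$. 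Granting this, write an arbitrary $\omega\in H^1(G,\famS;\F_p)$ as $\psi^*\overline\omega$; then for any $v=\phi^*\overline v\in\phi^*H^1(\overline G;\F_p)$, naturality of the cup product for maps of pairs \cite[Proposition 3.1]{Wilkes17} gives $\omega\smile v=\psi^*(\overline\omega\smile\overline v)\in\psi^*\big(H^2(\overline G,\{1,1\};\F_p)\big)=0$. Thus the whole subspace $\phi^*H^1(\overline G;\F_p)$ lies in the right radical of the pairing $H^1(G,\famS;\F_p)\times H^1(G;\F_p)\to H^2(G,\famS;\F_p)$, which is non-degenerate since $(G,\famS)$ is a PD$^2$ pair; hence $\phi^*H^1(\overline G;\F_p)=0$, and as $\phi^*$ is injective $H^1(\overline G;\F_p)=0$, so $\overline G=1$ and $n=0$. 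As in the first paragraph, $S_1=G$ and then $S_0=G$, so $G=S_0=S_1$.

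The one genuinely delicate step is the choice of comparison object: recognising that one should cap off $\overline{\gpn{S_1}}$ — which, under the hypothesis, simultaneously caps off $S_0$ — to reach the pair $(\overline G,\{1,1\})$, and then convert the vanishing $H^2(\overline G,\{1,1\};\F_p)=0$ into the degeneracy of half of the cup-product form of $(G,\famS)$ by naturality. Once this is set up, the surjectivity of $\psi^*$ on $H^1$ is a routine four-lemma chase, and the identifications of the restriction maps used there follow at once from $s_0\equiv s_1$ modulo $G^p[G,G]$ and the freeness (hence torsion-freeness) of $\overline G$.
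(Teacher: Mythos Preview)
Your argument is correct and follows essentially the same route as the paper: pass to the quotient pair $(\overline G,\{1,1\})$, use freeness of $\overline G$ to kill $H^2(\overline G,\{1,1\};\F_p)$, show via the comparison of long exact sequences that $\psi^*$ is onto $H^1(G,\famS;\F_p)$ (the paper phrases this as an isomorphism, which is equivalent here) and that $\phi^*H^1(\overline G;\F_p)$ is an $n$-dimensional subspace of the right radical, forcing $n=0$. The only difference is in the converse, where the paper simply invokes \cite[Proposition 6.16]{Wilkes17} while you give a direct verification (and the annulus remark); your computation is fine and a little more self-contained.
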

\begin{proof}
Let $\overline{G}=G/\overline{\gpn{S_1}}$. By considering the commuting diagram of long exact sequences for the map of pairs $(G,\famS) \to (\overline{G}, \{1\}\sqcup\{1\})$ one may readily show that the maps
\[H^1(\overline{G}, \{1\}\sqcup\{1\})\to H^1(G,\famS),\quad H^1(\overline{G})\to H^1(G)\]
with coefficients $\F_p$ are respectively an isomorphism and an injection. Since $\overline{G}$ is free, $H^2(\overline{G}, \{1\}\sqcup\{1\})=0$ and the cup product on $(\overline{G}, \{1\}\sqcup\{1\})$ vanishes. Therefore the cup product of any element of $H^1(G,\famS)$ with the $n$-dimensional subspace $H^1(\overline{G})$ of $H^1(G)$ is zero. This cup product is non-degenerate so $n=0$ as claimed. The converse statement is a case of \cite[Proposition 6.16]{Wilkes17}.
\end{proof}
We now tackle the somewhat more fiddly case when we `cap off' the only peripheral subgroup.
\begin{theorem}\label{CappingOff2}
Let $G$ be a free pro-$p$ group of rank $n>0$ and let $S\subseteq [G,G]G^p$ be a cyclic subgroup. Let $\overline{G}=G/\overline{\gpn{S}}$ and let $\phi$ be the quotient map. Then $(G,S)$ is a PD$^2$ pair if and only if $\overline{G}$ is a Demushkin group. When this occurs the orientation characters $\chi$ and $\overline{\chi}$ of $(G,S)$ and $\overline{G}$ are related by $\chi=\overline{\chi}\phi$.
\end{theorem}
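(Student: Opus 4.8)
The plan is to compute the relative cohomology of $(G,S)$ from an explicit short free resolution, to identify it with the cohomology of the one-relator pro-$p$ group $\overline G$ via $\phi$, and to extract both assertions from this identification. Write $S=\overline{\gpn{s}}$ with $s\in[G,G]G^p$. If $s=1$ then $\overline G=G$ is free and $H^2(G,S;\F_p)=H^2(G;\F_p)=0$, so neither side holds; thus assume $s\neq 1$, whence $S\iso\Z[p]$ since $G$ is torsion-free. Tensoring the augmentation sequence of $S$ with $\Zpof{G}$ over $\Zpof{S}$ (legitimate, as $\Zpof{G}$ is free as a left $\Zpof{S}$-module) gives $0\to\Zpof{G}\xrightarrow{\,\cdot(s-1)\,}\Zpof{G}\to\Zpof{G/S}\to0$, and splicing this with the defining sequence $0\to\Delta_{G,S}\to\Zpof{G/S}\to\Z[p]\to0$ and the short exact sequence $0\to I_G\to\Zpof{G}\to\Z[p]\to0$ (with $I_G$ the augmentation ideal, which is free of rank $n$ as $G$ is free) produces a free resolution of length one
\[0\longrightarrow\Zpof{G}\xrightarrow{\ \cdot(s-1)\ }I_G\longrightarrow\Delta_{G,S}\longrightarrow0 .\]
Hence $H^k(G,S;A)=0$ for $k\geq 3$, while $H^1(G,S;A)$ and $H^2(G,S;A)$ are the kernel and cokernel of the map $A^n\to A$ induced by multiplication by $s-1$, which in a basis $x_1,\dots,x_n$ of $G$ is given by the Fox derivatives: $(a_i)_i\mapsto\sum_i(\partial s/\partial x_i)\cdot a_i$. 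For trivial coefficients this map is zero, because the exponent sums of $s$ are divisible by $p$; so $H^1(G,S;\F_p)\iso\F_p^{\,n}$ and $H^2(G,S;\F_p)\iso\F_p$, all $H^k(G,S;\F_p)$ are finite-dimensional, and therefore $(G,S)$ is a PD$^2$ pair precisely when the cup product $H^1(G,S;\F_p)\times H^1(G;\F_p)\to H^2(G,S;\F_p)$ is non-degenerate.

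The group $\overline G=G/\overline{\gpn{s}}$ is a one-relator pro-$p$ group on $n$ generators with relator in the Frattini subgroup, so $\dim H^1(\overline G;\F_p)=n$ and $\dim H^2(\overline G;\F_p)=1$, and its Fox--Lyndon partial resolution of $\Z[p]$ over $\Zpof{\overline G}$ is the reduction modulo $\overline{\gpn{s}}$ of the resolution above. Regarding $\phi$ as a map of pairs $(G,\{S\})\to(\overline G,\{1\})$ and chasing the induced chain map shows that $\phi$ induces an isomorphism $H^1(\overline G;\F_p)\to H^1(G,S;\F_p)$, and also an isomorphism $H^2(\overline G;\F_p)\to H^2(G,S;\F_p)$ once one observes that $\dim H^2(\overline G;\F_p)=1$ forces $H^2(\overline G;\F_p)$ to coincide with the cokernel that computes $H^2(G,S;\F_p)$, on which the comparison map is the identity. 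Together with the inflation isomorphism $H^1(\overline G;\F_p)\to H^1(G;\F_p)$ and naturality of cup products under maps of pairs \cite[Proposition 3.1]{Wilkes17}, this identifies the bilinear form above with the defining cup-product form $H^1(\overline G;\F_p)\times H^1(\overline G;\F_p)\to H^2(\overline G;\F_p)$ of the Demushkin condition. Hence $(G,S)$ is a PD$^2$ pair if and only if $\overline G$ is a Demushkin group.

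For the orientation characters, recall that the orientation character of a PD$^2$ pair vanishes on each peripheral subgroup; since $\U_p$ is abelian, $\chi$ then vanishes on $\overline{\gpn{s}}$ and factors as $\chi=\rho\phi$ for a unique $\rho\colon\overline G\to\U_p$. By Proposition \ref{IdentifyChar} it suffices to prove $|H^2(G,S;I_m(\overline\chi\phi))|=p^m$ for every $m$, and by the length-one resolution this cohomology group is the cokernel of the action of the elements $\partial s/\partial x_i$ on $I_m(\overline\chi)$. If $\cd_p\overline G=2$ then $\overline G$ is a PD$^2$ group, the genuine Fox--Lyndon resolution shows this cokernel is exactly $H^2(\overline G;I_m(\overline\chi))$, and duality for $\overline G$ gives $|H^2(\overline G;I_m(\overline\chi))|=p^m$ (cf.\ the proof of Proposition \ref{IdentifyChar}); the only Demushkin group with $\cd_p\neq 2$ is $\Z/2$, and in that case ($G\iso\Z[p]$, $S$ of index $2$, $s=x^2$) the single element $\partial s/\partial x=1+x$ acts as $1+(-1)=0$ on $I_m(\overline\chi)$ for the sign character $\overline\chi$, so the cokernel again has order $p^m$. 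Applying Proposition \ref{IdentifyChar} yields $\chi=\overline\chi\phi$, that is $\rho=\overline\chi$.

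The step I expect to be the main obstacle is the chain-level comparison in the second paragraph: showing that $\phi$ genuinely induces an \emph{isomorphism}, not merely some map of one-dimensional spaces, on $H^2(-;\F_p)$, and that the two cup-product forms are really identified by it. The delicacy is that $\overline G$ need not have finite cohomological dimension, so the naive isomorphism $H^2(\overline G;A)\iso H^2(G,S;A)$ is false for general coefficient modules $A$ — $H^2(\overline G;A)$ is then only a subgroup of the cokernel computing $H^2(G,S;A)$ — and this is exactly why the case $\overline G\iso\Z/2$ must be isolated in the orientation-character argument.
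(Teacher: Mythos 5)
Your proposal is correct, but it reaches the conclusion by a genuinely different and in places slicker route than the paper. For the equivalence, the paper also reduces to comparing the cup product on $(G,S)$ with the Demushkin form on $\overline G$ via the natural maps $H^i(\overline G,1;\F_p)\to H^i(G,S;\F_p)$, but it establishes the crucial isomorphism on $H^2$ by a lengthy bar-resolution computation: it identifies the transgression in the five-term exact sequence for $1\to \overline{\gpn{S}}\to G\to\overline G\to 1$ with the connecting map $H^1(R)^G\to H^2(G,R)$ using the Horseshoe Lemma and an explicit continuous section. Your route instead builds the length-one free resolution $0\to\Zpof{G}\xrightarrow{\cdot(s-1)}I_G\to\Delta_{G,S}\to 0$ and compares it with the Fox--Lyndon complex of $\overline G$; the payoff is that the comparison map on $H^2(-;\F_p)$ becomes visibly \emph{injective} (it is induced by the identity $\Hom_{\overline G}(\Zpof{\overline G},\F_p)\to\Hom_G(\Zpof{G},\F_p)$ after lifting $\Delta_{G,S}\to\Delta_{\overline G,1}$ to a chain map whose degree-one component is the reduction $\Zpof{G}\to\Zpof{\overline G}$), so a dimension count finishes it. That lifting computation is the one step you assert rather than carry out (``chasing the induced chain map''), and you rightly flag it as the crux; it should be written down, but with these small resolutions it is a few lines rather than the paper's page, and it correctly accounts for the failure of $H^2(\overline G;A)\to H^2(G,S;A)$ to be surjective for general $A$ when $\cd_p\overline G=\infty$. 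For the orientation character the two arguments diverge more: the paper again runs the five-term sequence against a Poincar\'e-duality diagram to show the map $H^1(G,I_m(\chi))\to H^1(R,\Z/p^m)^G$ vanishes, whereas you compute $|H^2(G,S;I_m(\overline\chi\phi))|$ directly as a cokernel from the resolution, invoke duality for the PD$^2$ group $\overline G$ when $\cd_p\overline G=2$, and isolate $\overline G\iso\Z/2$ by hand; both are valid, and yours buys a uniform mechanism (Proposition \ref{IdentifyChar} plus the explicit cokernel) at the cost of a case split, while the paper's argument avoids the case split but needs the injectivity of $\Hom(\overline{\gpn{S}},\Z/p^m)^G\to\Hom(S,\Z/p^m)$.
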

\begin{proof}
Firstly note that $\overline{G}$ is Demushkin if and only if $H^2(\overline{G}, 1; \F_p)=\F_p$ and the cup product 
\[H^1(\overline{G},1; \F_p)\otimes H^1(\overline{G}, \F_p)\to H^2(\overline{G}, 1; \F_p) \]
is non-degenerate. This equivalence follows immediately from the natural isomorphisms
\[H^1(\overline{G},1; \F_p)\iso H^1(\overline{G}, \F_p), \quad H^2(\overline{G},1; \F_p)\iso H^2(\overline{G}, \F_p) \]
Now note that in our situation we have $H^2(\overline{G};\F_p)\iso \F_p$ unless $S$ is trivial, in which case we are done. This follows since $\overline{G}$ is not free (since $H^1(\overline{G})=H^1(G)$, if $\overline G$ were free the map $\phi$ would be an isomorphism and $S$ would be trivial) so has nontrivial cohomology in dimension two (see \cite[Theorem 7.7.4]{RZ00}). Furthermore the dimension of $H^2(\overline{G})$ is at most one since $\overline{G}$ is a one-relator pro-$p$ group. See \cite[Section I.4.3]{Serre13}.

From the condition $S\subseteq [G,G]G^p$ and long exact sequences in cohomology one easily deduces that the natural map 
\[H^1(\overline{G}, 1; \F_p)\to H^1(G,S; \F_p) \]
is an isomorphism. Once we know that the corresponding map on $H^2$ is an isomorphism the first part of the theorem follows immediately. The long exact sequence implies that $H^2(G,S)$ is also isomorphic to $\F_p$. Verifying that the {\em natural} map $H^2(\overline{G}, 1)\to H^2(G,S)$ is itself an isomorphism is rather technical and we delay it to the end of the proof. 

For now we move onto verifying that the orientation characters of $(G,S)$ and $\overline{G}$ agree. Let $m$ be a natural number. We will make use of the Five Term Exact Sequence for the extension $1\to R\to G\to\overline{G}\to 1$ \cite[Corollary 7.2.5]{RZ00}. Here the relevant portion takes the form
\begin{equation}\label{5TES}
H^1(G,I_m(\rho\phi))\to H^1(R, \Z/p^m)^G\to H^2(\overline{G}, I_m(\rho))\to 0
\end{equation} 
where $R=\overline{\gpn{S}}$ and $\rho\colon\overline{G}\to\U_p$ is any homomorphism. Taking $\rho =1 $ shows that the middle term is finite (using $H^2(\overline{G}; \F_p) = \F_p$ and d\'evissage to prove that $H^2(\overline{G},\Z/p^m)$ is finite). Now consider varying $\rho$. By Proposition \ref{IdentifyChar} the modulus of the final term attains its unique maximum when $\rho$ agrees with $\overline\chi$ modulo $p^m$. On the other hand if there is a $\rho$ such that the first map in \eqref{5TES} vanishes then this $\rho$ will maximise the size of the final term, hence this $\rho$ agrees with $\overline\chi$ modulo $p^m$. In particular, since $\chi|_S$ vanishes we may write $\chi=\rho_0\phi$ for some $\rho_0$. There is now a commutative diagram induced by Poincar\'e duality 
\[\begin{tikzcd} 
& H^1(G, I_m(\chi)) \ar{dl} \ar{dd} \ar{r}{\iso} & H_1(G,S; \Z/p^m) \ar{dd}{0} \\
H^1(R, \Z/p^m)^G\ar[hookrightarrow]{dr} & & \\
& H^1(S, \Z/p^m) \ar{r}{\iso} & H_0(S; \Z/p^m)
\end{tikzcd}\]
where the fact that the rightmost map is zero follows from the long exact sequence in relative homology, and the fact that the lower left map is injective is the statement that any $G$-invariant homomorphism from $\overline{\gpn{S}}$ to $\Z/p^m$ which vanishes on $S$ is trivial. We have also used the fact that $\Z[p](\chi)\otimes I_m(\chi)$ is isomorphic to the trivial module $\Z/p^m$. It follows that the upper left map vanishes and therefore $\rho_0$ agrees with $\overline{\chi}$ and $\chi$ agrees with $\overline\chi\phi$ modulo $p^m$ for every $m$ as required.

We must now tackle the final verification we have been avoiding, that the natural map $H^2(\overline G, 1)\to H^2(G,S)$ is an isomorphism. Here and for the rest of the proof the coefficient group is assumed to be $\F_p$. We approach $H^2(\overline G)$ using the Five Term Exact Sequence as above, so it will be necessary to understand the cohomology of $R$. Firstly note that the full five term exact sequence in this case is
\[0\to H^1(\overline G)\stackrel{\iso}{\longrightarrow} H^1 (G) \to H^1(R)^G\stackrel{\tg}{\longrightarrow} H^2 (G) \to 0 \]
So that the transgression map $\tg$ is an isomorphism. By \cite[Corollary to Proposition 26]{Serre13} we know $H^1(R)^G\iso\F_p$, and as above the natural map $H^1(R)^G\to H^1(S)$ is injective, hence it is an isomorphism. 

We may also consider the long exact sequence in cohomology for the pair $(G,R)$ with coefficients in $\F_p$. This is a sequence of vector spaces so applying any additive functor---for instance the functor $(-)^G$ taking $G$-invariants---preserves exactness. From the Five Lemma and the data already collected we find that the natural map from the resulting sequence to the long exact sequence for $(G,S)$ is an isomorphism. We now have a diagram of maps
\[\begin{tikzcd} 
H^1(R)^G \ar[equal]{r} \ar{d}[swap]{\tg}{\iso} & H^1(R)^G \ar{r}[swap]{\delta}{\iso}\ar{d}{\iso} & H^1 S\ar{d}{\iso}\\
H^2 (\overline{G}) \ar{r}{\phi^\ast} & H^2(G,R)^G \ar{r}{\iso} & H^2(G,S)
\end{tikzcd}\]
where the right hand square commutes. Here the bottom left horizontal map is induced by the map of pairs $(G,R)\to(\overline{G}, 1)$ and the isomorphism $H^2(\overline{G},1)=H^2(\overline{G})$. Our aim is therefore to show that the composition along the bottom row is an isomorphism. It only remains to show that the left hand square actually commutes (in fact it will turn out that it commutes up to sign, but this is enough). This verification takes place on the level of chain complexes. 

For a pro-$p$ group $H$ and a subgroup $K$, let $C_\bullet(H)$ be the (inhomogeneous) bar resolution of $H$ over \Z[p] (see \cite[Section 6.2]{RZ00}). For our purposes it is enough to recall that the lower stages are given by
\[\Zpof{H}[\![H^2]\!] \stackrel{d_1}{\longrightarrow} \Zpof{H}[\![H]\!] \stackrel{d_0}{\longrightarrow} \Zpof{H}\]
where the boundary maps are given on the basis elements $[h_1,h_2]\in H^2$ and $[h]\in H$ by 
\[d_1([h_1, h_2]) = h_1[h_2] - [h_1h_2] + [h_1], \quad d_0([h]) = h-1 \] 
Furthermore there is a natural chain map 
\[\ind^K_H(C_\bullet(K)) \hookrightarrow C_\bullet(H)\]
which is an isomorphism on the zero level. Each quotient module $C_\bullet(H,K)$ is projective; indeed it is a free $\Zpof{H}$-module on the pointed profinite space obtained by collapsing the closed subset $K^n\subseteq H^n$ to a point. See \cite[Section 5.2]{RZ00} for information on free modules on pointed profinite spaces. The long exact sequence in homology for the short exact sequence of chain complexes
\[0\to \ind^K_H(C_\bullet(K)) \longrightarrow C_\bullet(H)\stackrel{\qt}{\longrightarrow} C_\bullet(H,K) \to 0\]
shows that the final term is a resolution for $\Delta_{H,K}$ (shifted by one degree).

In our case of interest this gives us standard chain complexes which may be used to compute the various cohomology groups in play. In particular we have an identification 
\[C_\bullet(\overline{G}, 1)\equiv C_{\bullet+1}(\overline G) \]
and the obvious map (of $G$-modules) 
\[C_\bullet(G)\longrightarrow C_\bullet(G,R)\stackrel{\qt}{\longrightarrow} C_\bullet(\overline{G}) \] 
gives the map $H^\bullet(\overline G)\to H^\bullet(G,R)$.

Now let $\zeta\colon R\to \F_p$ be a $G$-invariant homomorphism giving a generator of $H^1(R)^G$. Let $\sigma\colon \overline{G}\to G$ be a continuous section of the quotient map such that $\sigma(1)=1$ and let $\pi(g)= g\sigma(gR)^{-1}$ be the retraction $G\to R$ so defined. The section $\sigma$ exists by \cite[Proposition 2.2.2]{RZ00}. The transgression $\tg(\zeta)$ is defined in the following manner (see \cite[Section 1.4]{labute67}). The cochain \[\zeta\pi d_2\colon C_2(G)\to \F_p\]
may be shown to factor through $C_2(\overline G)$, and this cochain is $\tg(\zeta)$ by definition. It therefore also factors through $C_2(G,R)$ giving a cochain representing $\phi^\ast\tg(\zeta)$.

Now let us compute the map $\delta\colon H^1(R)^G\to H^2(G,R)$. Applying the Horseshoe Lemma (see \cite[Lemma 2.2.8]{weibel95}) gives the commuting diagram of (partial) resolutions 
\[\begin{tikzcd}[ampersand replacement = \&, row sep = huge]
C_2(G,R)\ar{r}\ar{d}{d_1} \& C_2(G,R)\oplus C_1(G) \ar{r}\ar{d}{{\begin{pmatrix} d_1 & \qt \\ 0 & d_0\end{pmatrix}}}  \&C_1(G)\ar{d}{d_0}\\
C_1(G,R)\ar{r}\ar{d}{d_0} \& C_1(G,R)\oplus C_0(G) \ar{r}\ar{d}{(i\circ d_0)\oplus \epsilon} \&C_0(G)\ar{d}{\epsilon}\\
\Delta_{G,R}\ar{r}{i} \& \Zpof{G/R} \ar{r} \& \Z[p]
\end{tikzcd}\]
The central column has a chain map to the standard resolution given by 
\[\begin{tikzcd}[ampersand replacement = \&]
C_2(G,R) \oplus C_1(G) \ar{d}{\alpha\oplus \beta} \ar{r} \& C_1(G,R)\oplus C_0(G) \ar{r}\ar{d}{\nu \oplus \id} \& \Zpof{G/R} \ar[equal]{d}\\
\Zpof{G}[\![R]\!] \ar{r} \& \Zpof{G} \ar{r} \& \Zpof{G/R}
\end{tikzcd}\]
where $\nu([g])=\sigma(gR)-1$ and
\[\alpha([g_1,g_2]) = g_1g_2[\tilde\pi(g_2)]-g_1g_2[\tilde\pi(g_1g_2)]+g_1[\tilde\pi(g_1)] - g_1[1] \]
where $\tilde\pi(g) = g^{-1}\sigma(gS) = (\pi(g)^{-1})^g$ for some map $\beta$ (which is irrelevant to what follows). We now readily see that $\delta(\zeta)=\zeta\alpha\colon C_2(G,R)\to\F_p$ is induced by the cochain $\zeta\tilde\pi d_2\colon C_2(G)\to \F_p$. As $\zeta\tilde\pi=-\zeta\pi$ we are done.
\end{proof}

\section{Classification}
In this section we will prove the main classification theorem. 
\begin{defn}
Let $(G,\famS)$ be a PD$^2$ pair with $\famS=\{S_0,\ldots, S_b\}$. A {\em basis for $G$ in standard form} (or simply {\em standard form basis}) is an ordered free generating set 
\[\mathcal{B}=\{x_1,\ldots,x_n,s_1,\ldots,s_b\}\]
such that $s_i$ generates some conjugate of $S_i$ for each $i$.
\end{defn}
Note that when $b\geq 1$ any standard form basis of $G/\overline{\gpn{S_b}}$ is the image of the first $n+b-1$ elements of some standard form basis of $G$.
\begin{defn}
For integers $n$ and $b$ let $G$ be a free group of rank $n+b$ on a generating set \[\mathcal{B}=\{x_1,\ldots,x_n,s_1,\ldots,s_b\}\]
 Let $\chi\colon G\to \U_p$ be a homomorphism vanishing on the $s_i$. Define the {\em standard word} $r_1(n,b,\chi; \mathcal{B})\in G$ according to the following cases.
\begin{itemize}
\item If $n$ is even and $q(\chi)\neq 2$ then \[r_1(n,b,\chi; \mathcal{B})= x_1^q[x_1,x_2]\cdots[x_{n-1}, x_n]s_1\cdots s_b\]
\item If $q(\chi)=2$, $n$ is even and $\im(\chi)= \U_2^{[f]}$ for some $f\geq 2$ then \[r_1(n,b,\chi; \mathcal{B})=x_1^{2+2^f}[x_1,x_2]\cdots[x_{n-1}, x_n]s_1\cdots s_b\]
\item If $q(\chi)=2$, $n$ is even and $\im(\chi)= \gp{-1}\times\U_2^{(f)}$ for some $f\geq 2$ then \[r_1(n,b,\chi; \mathcal{B})=x_1^2[x_1,x_2]x_3^{2^f}[x_3,x_4]\cdots[x_{n-1}, x_n]s_1\cdots s_b\]
\item If $q(\chi)=2$, $n$ is odd and $\im(\chi)= \gp{-1}\times\U_2^{(f)}$ for some $f\geq 2$ then \[r_1(n,b,\chi; \mathcal{B})=x_1^{2}x_2^{2^f}[x_2,x_3]\cdots[x_{n-1}, x_n]s_1\cdots s_b\]
\end{itemize}
If none of these cases holds we leave $r_1(n,b,\chi,\mathcal{B})$ undefined. Note that whether or not $r_1$ is defined depends only on $n$ and $\im(\chi)$, not on $b$ or $\cal B$. When the basis $\cal B$ or the invariants $(n,b,\chi)$ are clear from context we will omit them from the notation.
\end{defn}
The main theorem of this paper is the following classification theorem.
\begin{theorem}\label{MainThm}
Let $G$ be a free pro-$p$ group of rank $n+b$ and $\famS=\{S_0,\ldots, S_b\}$ be a finite family of closed procyclic subgroups where $b\geq 0$, $n\geq 0$ and $n+b\geq 1$. Fix a generator $s_0$ of $S_0$. Then $(G,\famS)$ is a PD$^2$ pair with orientation character $\chi$ if and only there exists a standard form basis ${\cal B}=\{x_1,\ldots,x_n,s_1,\ldots,s_b\}$ of $G$ such that $s_0=r_1(n,b,\chi;\cal B)$.
\end{theorem}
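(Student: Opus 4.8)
The plan is to prove both implications at once by induction on $b$, peeling off one peripheral subgroup at a time with the capping‑off results of Section 2 until the Demushkin classification (Theorem \ref{DemusClass}) applies. For the base case $b=0$ one has $n\geq 1$, and — using that $s_0\in G^p[G,G]$ when $(G,\famS)$ is a PD$^2$ pair (Proposition \ref{PropStdBasis} with empty product) and that every standard word $r_1(n,0,\chi;\mathcal B)$ visibly lies in $G^p[G,G]$ — Theorem \ref{CappingOff2} reduces the statement to: $\overline G=G/\overline{\gpn{s_0}}$ is a Demushkin group with orientation character $\overline\chi$ (and $\chi=\overline\chi\phi$) if and only if $s_0$ is $\Aut(G)$‑equivalent to one of the words of Theorem \ref{DemusClass}. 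Since $r_1(n,0,\chi;\cdot)$ was defined to reproduce that list case by case, and ``$s_0=r_1(n,0,\chi;\mathcal B)$ for some basis $\mathcal B$'' means exactly ``$s_0$ is $\Aut(G)$‑equivalent to the corresponding word'', the base case is Theorem \ref{DemusClass}.

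For the inductive step $b\geq 1$ I would first set aside the exceptional configuration $S_0\subseteq\overline{\gpn{S_b}}$ (possible only for $b=1$, by the Remark after Theorem \ref{CappingOff1}), where Proposition \ref{TwoPeripheralsTheSame} forces $n=0$, $G=S_0=S_1$, $\chi$ trivial, and $r_1(0,1,\chi;\{s_1\})=s_1$, so that $s_0=r_1$ merely records the choice $s_1=s_0$; this also covers the small case $n+b=1$. Otherwise $S_i\not\subseteq\overline{\gpn{S_b}}$ for $i\neq b$, and torsion‑freeness of $\overline G:=G/\overline{\gpn{S_b}}$ gives hypothesis (3) of Theorem \ref{CappingOff1}, while (1) and (2) hold because $\{s_1,\dots,s_b\}$ lies in the basis and the $x$‑part of $r_1$ lies in $G^p[G,G]$ (``if'' direction), respectively by Proposition \ref{PropStdBasis} (``only if'' direction). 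Theorem \ref{CappingOff1} then reduces PD$^2$‑ness of $(G,\famS)$ to that of $(\overline G,\overline\famS)$ with $\chi=\overline\chi\phi$. The ``if'' half is now quick: given a standard form basis $\mathcal B$ with $s_0=r_1(n,b,\chi;\mathcal B)$, one notes $\chi$ kills $\overline{\gpn{S_b}}$ so $\chi=\overline\chi\phi$, that $\overline{\mathcal B}=\phi(\mathcal B\setminus\{s_b\})$ is a standard form basis of $\overline G$, and that applying $\phi$ to the word (the $x$‑part and $s_1\cdots s_{b-1}$ persist, $s_b\mapsto 1$, and $\overline\chi$ has the same invariants as $\chi$) gives $\phi(s_0)=r_1(n,b-1,\overline\chi;\overline{\mathcal B})$; the inductive hypothesis and Theorem \ref{CappingOff1} finish it.

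The ``only if'' half is the crux. Given that $(G,\famS)$ is a PD$^2$ pair, Theorem \ref{CappingOff1} makes $(\overline G,\overline\famS)$ one too (character $\overline\chi$, with $\phi|_{S_0}$ injective), so the inductive hypothesis — applied with the chosen generator $\phi(s_0)$ of $\phi(S_0)$ — yields a standard form basis $\overline{\mathcal B}$ of $\overline G$ with $\phi(s_0)=r_1(n,b-1,\overline\chi;\overline{\mathcal B})$. Lifting $\overline{\mathcal B}$ to a standard form basis $\mathcal B=\{x_1,\dots,x_n,s_1,\dots,s_{b-1},s_b\}$ of $G$ (the remark after the definition of ``standard form basis'') and setting $w:=r_1(n,b-1,\chi;\mathcal B\setminus\{s_b\})^{-1}s_0$, one gets $w\in\overline{\gpn{S_b}}$ and, formally, $s_0=r_1(n,b,\chi;\{x_1,\dots,x_n,s_1,\dots,s_{b-1},w\})$. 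What remains is to check that this last set is again a standard form basis. That it is a free basis, and that $\overline{\gpn{w}}=\overline{\gpn{S_b}}$, is a routine $H_1$ computation fed by Proposition \ref{PropStdBasis}: the injectivity of $\bigoplus_{i=1}^b H_1(S_i;\F_p)\to H_1(G;\F_p)$ forces the $S_b$‑component of the class of $s_0$ to be non‑zero. The hard point — which I expect to absorb most of the work — is to show $\gp w$ is a \emph{conjugate of} $S_b$, not merely a procyclic free factor with the correct normal closure; these are genuinely different notions in a free pro‑$p$ group (for instance $\gp{bab^{-1}a}$ and $\gp a$ have the same normal closure in the free pro‑$p$ group on $a,b$ for $p$ odd, yet are not conjugate). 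Closing this gap seems to require feeding the Poincaré duality of $(G,\famS)$ back in: the ``if'' direction already makes $(G,\{S_0,\dots,S_{b-1},\gp w\})$ a PD$^2$ pair with the same character and the same cap‑off $(\overline G,\overline\famS)$, and one then recovers $\gp w$ up to conjugacy by running the long exact sequences back up; alternatively one chooses the lifts of the $\bar x_i$ judiciously from the outset (using the slack in Proposition \ref{PropStdBasis}) so that $w$ already emerges as a generator of a conjugate of $S_b$. Either way, this promotion of ``$w$ is primitive, lies in $\overline{\gpn{S_b}}$, and has the right $H_1$‑class'' to ``$\gp w$ is conjugate to $S_b$'' is the single real obstacle; everything else is bookkeeping with the capping‑off theorems, the Demushkin classification, and the effect of the quotient map on $r_1$.
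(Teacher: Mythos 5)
Your treatment of the ``if'' direction and of the reduction via Theorems \ref{CappingOff1}, \ref{CappingOff2} and Proposition \ref{TwoPeripheralsTheSame} matches the paper. But the ``only if'' direction has a genuine gap, and you have located it precisely yourself: after lifting the basis from $\overline G$ and setting $w=r_1(n,b-1,\chi;\cdot)^{-1}s_0$, nothing you prove forces $\gp{w}$ to be a \emph{conjugate} of $S_b$. Your own example ($\gp{bab^{-1}a}$ versus $\gp a$) shows that ``primitive, contained in $\overline{\gpn{S_b}}$, correct class in $H_1$'' does not imply conjugacy, and neither of your two proposed repairs closes this. The first (``run the long exact sequences back up'') cannot work as stated: homological data of the pair $(G,\{S_0,\dots,S_{b-1},\gp w\})$ sees only the normal closure and the $H_1$-class of $w$, which is exactly the information you already have and which is insufficient. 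The second (``choose the lifts judiciously from the outset'') is closer in spirit to what is actually needed, but it is not a single clever choice: one must perturb \emph{all} of $x_1,\dots,x_n,s_1,\dots,s_b$ simultaneously, infinitely often, and pass to a limit.

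Concretely, the paper splits the forward direction into two stages that your induction collapses into one. First (Proposition \ref{ModuloG3}) the capping-off induction is run only \emph{modulo $G_3$} (third term of the lower central $q$-series), and even there the statement that the error term $y\in\overline{\gpn{s_b}}$ with $\zeta_b(y)=1$ is a conjugate of $s_b$ modulo $G_3$ requires a finite-quotient/commutator-identity argument, not just an $H_1$ computation. Second, the congruence modulo $G_3$ is upgraded to equality by successive approximation: at stage $j$ one replaces $x_i$ by $x_it_i$ and $s_i$ by $s_i^{t_{n+i}}$ with $t_i\in G_{j-1}$, and the fact that the resulting linear map $\delta_{j-1}^{\cal B}\colon \gr_{j-1}(G)^{n+b}\to\gr_j(G)$ is surjective is exactly Labute's Proposition 5 from the Demushkin classification. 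This surjectivity genuinely uses contributions from every basis element, which is why fixing the lift of $\overline{\cal B}$ and adjusting only the last generator cannot succeed. The $q=2$ case needs further corrections (the $e({\cal B},\underline\lambda)$ terms and a final appeal to Theorem \ref{DemusClass} to absorb them), which your outline does not address. In short: your bookkeeping is right, but the step you flag as ``the single real obstacle'' is the actual content of the theorem, and it is not resolved in your proposal.
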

\begin{rmk}
Implicit in the final sentence is the statement that $r_1(n,b,\chi)$ is not undefined.
\end{rmk}
The backwards implication---that these forms of words do genuinely give PD$^2$ pairs---follows immediately by an induction using Theorems \ref{CappingOff1} and \ref{CappingOff2} and Proposition \ref{TwoPeripheralsTheSame} taken together with the classification of Demushkin groups (Theorem \ref{DemusClass}).

The forwards implication will proceed in two steps. First we will use the results of the previous section to find a basic form for the word $s_0$. Then we will use the method of successive approximation to erode this word down to one of the forms above. 

From now on let $G$ be a free pro-$p$ group of rank $n+b$ and $\famS=\{S_0,\ldots, S_b\}$ be a finite family of closed procyclic subgroups where $b\geq 0$, $n\geq 0$ and $n+b\geq 1$ and assume $(G,\famS)$ is a PD$^2$ pair with orientation character $\chi$. Set $q=q(\chi)$ and let $(G_j)_{j\geq 1}$ be the lower central $q$-series of $G$. Fix the generator $s_0$ of $S_0$.
\begin{prop}\label{ModuloG3}
For some basis ${\cal B}$ of $G$ in standard form we have
\[s_0\equiv r_1(n,b,\chi; {\cal B})\modn{3}  \]
(and $r_1(n,b,\chi)$ is well-defined).
\end{prop}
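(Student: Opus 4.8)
The plan is to use the capping-off results of Section~2 together with the Demushkin classification to produce a standard form basis in which $s_0$ has nearly the right shape, and then to adjust the peripheral generators so as to kill the residual error modulo $G_3$. I first treat the case $n\geq 1$. Beginning from the standard form basis furnished by Proposition~\ref{PropStdBasis}, repeatedly apply Theorem~\ref{CappingOff1} to cap off $S_b,S_{b-1},\ldots,S_1$ in turn; its hypotheses hold at each stage, since (1) and (2) are among the conclusions of Proposition~\ref{PropStdBasis} (and are preserved by the capping operation), while (3) is automatic except possibly in the final step with one peripheral subgroup remaining, where the alternative $S_0\subseteq\overline{\gpn{S_1}}$ is excluded by Proposition~\ref{TwoPeripheralsTheSame} because $n\geq 1$. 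This yields a PD$^2$ pair $(\overline{G},\{\overline{S}_0\})$ with $\overline{G}=G/\overline{\gpn{S_1,\ldots,S_b}}$ free of rank $n$ and $\overline{S}_0\subseteq[\overline{G},\overline{G}]\overline{G}^p$; Theorem~\ref{CappingOff2} then shows $\overline{G}/\overline{\gpn{\overline{S}_0}}=G/\overline{\gpn{\famS}}$ is a Demushkin group, and throughout every orientation character is pulled back from the one below, so $q$ and $\im(\chi)$ are unchanged. In particular the form of $r_1(n,b,\chi)$ is defined (whether $r_1$ is defined depends only on $n$ and $\im(\chi)$, which are now realised by a Demushkin group), and by Theorem~\ref{DemusClass} there is a basis $y_1,\ldots,y_n$ of $\overline{G}$ in which the image of $s_0$ equals the corresponding Demushkin word $r_{\mathrm{Dem}}(y_1,\ldots,y_n)$, which is exactly the ``Demushkin part'' of $r_1(n,b,\chi)$.

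Next, lift the $y_i$ to elements $x_1,\ldots,x_n$ of $G$, choosing the lifts so that each $x_i$ has trivial $s_j$-component in $G^{\mathrm{ab}}$ (subtract off the $s_j$-contributions of an arbitrary lift). Then $\mathcal B_0=\{x_1,\ldots,x_n,s_1,\ldots,s_b\}$ is a standard form basis of $G$, being a minimal generating set of rank $n+b$. Put $r_0=r_1(n,b,\chi;\mathcal B_0)=r_{\mathrm{Dem}}(x_1,\ldots,x_n)\,s_1\cdots s_b$. The three properties of Proposition~\ref{PropStdBasis}, together with the identity $\phi(s_0)=r_{\mathrm{Dem}}(y)$ in $\overline{G}$ (which fixes the $x_i$-coordinates of $s_0$ in $G^{\mathrm{ab}}$) and the choice of lifts (which makes the $s_j$-coordinates of $s_0$ basis-independent, hence equal to $1$), show that $s_0$ and $r_0$ have the same image in $G^{\mathrm{ab}}$; thus $w:=s_0 r_0^{-1}\in\overline{[G,G]}$. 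Moreover the quotient $G\to G/\overline{\gpn{\famS}}$ kills both $s_0$ and $r_0$, so $w\in\overline{\gpn{\famS}}$ as well. Consequently the class of $w$ in $\gr_2(G)=G_2/G_3$ lies in the submodule $\mathcal M$ spanned by the brackets $[\bar s_j,\xi]$ with $1\leq j\leq b$ and $\xi\in\gr_1(G)$: this is a routine consequence of the commutator identities, using that $\overline{\gpn{\famS}}$ is generated as a closed normal subgroup by $s_0,\ldots,s_b$, that $r_{\mathrm{Dem}}(x)\in G_2$, and that hence $\bar s_0=\bar s_1+\cdots+\bar s_b$ in $\gr_1(G)$.

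Finally, absorb the discrepancy. Replacing each $s_j$ $(1\leq j\leq b)$ by a conjugate $s_j^{g_j}$ replaces $r_0$ by a word congruent modulo $G_3$ to $r_0\cdot\prod_j[s_j,g_j]$ --- since $[G_2,G]\subseteq G_3$, the correction terms are central modulo $G_3$ and may be collected --- and so changes the class of $w$ in $\gr_2(G)$ by $\sum_j[\bar s_j,\bar g_j]$, which ranges over all of $\mathcal M$. These replacements preserve the standard form and disturb nothing established above: they fix $G^{\mathrm{ab}}$, $\overline{\gpn{\famS}}$ and $\overline{\gpn{S_1,\ldots,S_b}}$. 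Choosing the $g_j$ to cancel the class of $w$ produces a standard form basis $\mathcal B$ with $s_0\equiv r_1(n,b,\chi;\mathcal B)\pmod{G_3}$. The remaining case $n=0$ (in which $\chi$ is necessarily trivial and $q=0$) is handled in the same spirit, reducing via Theorem~\ref{CappingOff1} to the situation $b=1$, $G=S_0=S_1$ covered by Proposition~\ref{TwoPeripheralsTheSame}, and then lifting and adjusting exactly as above.

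I expect the last two steps to be the main obstacle: pinning down precisely which subspace of $\gr_2(G)$ the error term $w$ is confined to, and checking that the gauge freedom in the choice of peripheral generators sweeps out exactly that subspace --- along with the bookkeeping needed to confirm that none of the successive adjustments undoes a property already secured.
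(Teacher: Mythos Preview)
Your proposal is correct and follows the same broad strategy as the paper (cap off via Theorems~\ref{CappingOff1}--\ref{CappingOff2}, invoke the Demushkin classification, then absorb the error by conjugating peripheral generators), but organises the argument differently. The paper proceeds by induction on $b$, peeling off one $S_b$ at a time: at the inductive step one has $s_0\equiv r_1(n,b-1,\chi)\cdot y\pmod{G_3}$ for some $y\in\overline{\gpn{s_b}}$ with $\zeta_b(y)=1$, and the key lemma---proved by passing to finite $p$-quotients of $G/G_3$ and manipulating commutator identities there---is that any such $y$ is conjugate to $s_b$ modulo $G_3$. You instead collapse all peripherals at once, lift a Demushkin basis back to $G$, and correct the global error $w=s_0r_0^{-1}$ by conjugating all the $s_j$ simultaneously. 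Your key step, that the class of $w$ lies in $\mathcal{M}=\mathrm{span}\{[\bar s_j,\xi]\}$, is \emph{not} quite a ``routine consequence of commutator identities'' as you claim: it genuinely uses that the image of $\overline{[G,G]}$ in $\gr_2(G)$ (the span of brackets) meets the kernel of $\gr_2(\phi)$ (which contains the $\pi\cdot\bar s_j$ as well as brackets) precisely in $\mathcal{M}$---that is, the direct-sum structure $\gr_2(G)=\mathrm{span}\{\pi\cdot\bar z_i\}\oplus\mathrm{span}\{[\bar z_i,\bar z_j]\}$ for a free pro-$p$ group. This is a true and standard fact, so the argument goes through; the paper's finite-approximation device is exactly what lets it avoid appealing to this structural input. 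Your global approach is arguably tidier once the lift is set up carefully, while the paper's one-step induction keeps the algebra more elementary at the cost of a slightly fiddlier inverse-limit argument.
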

\begin{proof}
We prove this by induction on $b$. The base cases are $b=0$---which holds by Theorem \ref{CappingOff2} and Theorem \ref{DemusClass}---and $b=1, n=0$, which holds by Proposition \ref{TwoPeripheralsTheSame}. 

Suppose that $b>1$ and that $n\neq 0$ if $b=1$. Then we may apply Theorem \ref{CappingOff1} and an induction hypothesis to show that for some standard form basis $\cal B$ we have
\[s_0\in r_1(n,b-1,\chi; {\cal B})\overline{\gpn{s_b}}G_3  \]
Let $y$ be some element of $\overline{\gpn{s_b}}$ such that 
\[s_0\equiv r_1(n,b-1,\chi; {\cal B})y\modn{3} \]
By Proposition \ref{PropStdBasis}, replacing $s_b$ by a power $s_b^\mu$ for some $\mu\in \Z[p]^{\!\times}$ we may assume that $\zeta_b(y)=1$ where $\zeta_b$ is the projection of $G$ onto $S_b$ given by killing the other elements of $\cal B$. We claim that modulo $G_3$ we may write $y$ as an element of the form $s_b^g$ for some $g\in G$.  

Consider an expression of $G/G_3$ as an inverse limit of finite $p$-group quotients $\varprojlim P_k$. Let $\psi_k\colon G/G_3\to P_k$ be the quotient map. For each $k$ we have $\psi_k(y)\in \overline{\gpn{\psi_k(s_b)}}$ and since $\zeta_b(y)=1$ we have $\psi_k(y)\equiv\psi_k(s_b)$ modulo $[P_k,P_k]$. 

Now since $P_k$ is finite, we have $\psi_k(y)\in\gpn{\psi_k(s_b)}$. That is, there are $g_i\in P_k$ and $m_i\in \Z$ ($1\leq i\leq r$) such that
\begin{eqnarray*}
\psi_k(y) &=& \prod_{i=1}^r (\psi_k(s_b)^{m_i})^{g_i}\\
&=& \prod_{i=1}^r \psi_k(s_b)^{m_i}[\psi_k(s_b)^{m_i}, g_i]\\
&=& \psi_k(s_b)^{\sum m_i} \prod_{i=1}^r [\psi_k(s_b), g_i^{m_i}]\\
&=& \psi_k(s_b)^{\prod g_i^{m_i}} 
\end{eqnarray*}
Here we have used the fact that $\psi_k(y)\equiv\psi_k(s_b)$ modulo $[P_k,P_k]$ in the final line to show that $\psi_k(s_b)^{\sum m_i}=\psi_k(s_b)$. We have made frequent use of commutator identities and the fact that $P_k$ was a quotient of $G/G_3$ hence all third commutators in $P_k$ vanish.

Hence $\psi_k(y)$ lies in the conjugacy class $\psi_k(s_b)^{P_k}$ of $s_b$ for all $k$. Since these conjugacy classes form a surjective inverse system with inverse limit $s_b^G$, there does indeed exist $g\in G$ such that $y=s_b^g$. Now replace $s_b$ by $s_b^g$ so that $s_0\equiv r_1(n,b-1,\chi)s_b=r_1(n,b,\chi)$ modulo $G_3$. This concludes the proof.
\end{proof}

We will now use the method of successive approximation to improve congruence modulo $G_3$ to equality.  We will first examine the $q\neq 2$ case to illustrate the method. The $q=2$ case simply requires more notation, together with some extra tidying up at the end.
\begin{proof}[Proof of Theorem \ref{MainThm}, $q\neq 2$ case]
Recall from Theorem \ref{DemusClass} that if $q\neq 2$ then $n=2N$ is even. If $q\neq 0$ then $n>0$. Let ${\cal B}=\{x_1,\ldots,x_n,s_1,\ldots,s_b\}$ be a basis of $G$ in standard form. If $\underline{t}=(t_1,\ldots, t_{n+b}) \in (G_{j-1})^{n+b}$ for $j\geq 3$ then we may define
\[{\cal B}'={\cal B}'(\underline{t})=\{x_1t_1,\ldots,x_nt_n,s_1^{t_{n+1}},\ldots,s_b^{t_{n+b}}\} \]
which is also a basis in standard form. One easily verifies that
\[r_1({\cal B}')\equiv r_1({\cal B})\modn{j} \]
so that there is a unique element $d_{j-1}^{\cal B}(\underline{t})\in G_n$ such that $r_1({\cal B}')= r_1({\cal B})d_{j-1}^{\cal B}(\underline{t})$. 

Let the images of $x_i$ and $s_i$ in \grj[1] be denoted $\xi_i$ and $\sigma_i$, let the image of $t_i$ in \grj[j-1] be $\tau_i$ and let the image of $\underline{t}$ in $\grj[j-1]^{n+b}$ be $\underline{\tau}$. Then the image of $d_{j-1}^{\cal B}(\underline{t})$ in \grj{} is 
\[\delta_{j-1}^{\cal B}(\underline{\tau}) = \pi\cdot\tau_1+\binom{q}{2}[\tau_1,\xi_1]+ \sum_{i=1}^N\big([\tau_{2i-1}, \xi_{2i}]+[\xi_{2i-1}, \tau_{2i}]\big) + \sum_{i=1}^b[\sigma_i, \tau_{n+i}] \]
as one may readily compute using commutator identities. This $\delta^{\cal B}_{j-1}$ is a $\Z[p]/q\Z[p]$-linear homomorphism $\grj[j-1]^{n+b}\to \grj{}$. Up to certain minus signs in the $\sigma_i$ terms and reordering some coordinates of $\grj[j-1]^{n+b}$ this is in fact the same as the map $\delta_{j-1}$ described in \cite[Proposition 5]{labute67}. Hence by that proposition we have $\im(\delta_{j-1}^{\cal B})=\grj$.

Therefore suppose that we have chosen a standard form basis ${\cal B}_{j-1}$ such that 
\[s_0 \equiv r_1({\cal B}_{j-1})\modn{j-1} \]
Then $s_0=r_1({\cal B}_{j-1})z$ for some $z\in G_j$. There exists $\underline{t} \in (G_{j-1})^{n+b}$ such that $\delta_{j-1}^{{\cal B}_{j-1}}(\underline{t})$ equals the image of $z$ in \grj. Form the new standard form basis ${\cal B}_{j}={\cal B}_{j-1}'(\underline{t})$ as above. Then 
\[s_0 \equiv r_1({\cal B}_j)\modn{j} \]
Since $\bigcap G_j=\{1\}$ and since ${\cal B}_{j-1}$ and ${\cal B}_j$ agree modulo $G_j$, for each $i$ the sequence comprising the $i^{\rm th}$ elements of the ${\cal B}_j$ converges to a limit in $G$ as $j\to \infty$. One may easily see that the set of limits $\cal B$ is a standard basis for $G$ and $s_0=r_1(\cal B)$ as required.
\end{proof}
We now move on to the $q=2$ case. The principle is the same, but the notation is more involved.
\begin{proof}[Proof of Theorem \ref{MainThm}, $q=2$ case]
Let ${\cal B}=\{x_1,\ldots,x_n,s_1,\ldots,s_b\}$ be a basis of $G$ in standard form. For $j\geq 3$, $\underline{t}=(t_1,\ldots, t_{n+b}) \in (G_{j-1})^{n+b}$ and $\underline{\epsilon}\in \{0,1\}^b$ and  define
\[{\cal B}'={\cal B}'(\underline{t}, \underline{\epsilon})=  \{x_1t_1,\ldots,x_nt_n,(s_1^{1+2^{j-1}\epsilon_1})^{t_{n+1}},\ldots,(s_b^{1+2^{j-1}\epsilon_b})^{t_{n+b}}\} \]
which is again a basis in standard form.

Again we have $r_1({\cal B}')\equiv r_1({\cal B})$ modulo $G_j$, so that there exists a unique element $d^{\cal B}_{j-1}(\underline{t},\underline{\epsilon})\in G_j$ such that 
\[r_1({\cal B}')= r_1({\cal B})d^{\cal B}_{j-1}(\underline{t},\underline{\epsilon})\]
One may readily compute that the image of $d^{\cal B}_{j-1}(\underline{t},\underline{\epsilon})$ in \grj{} is
\begin{multline*}
\delta_{j-1}^{\cal B}(\underline{\tau}) = \pi\cdot\tau_1+[\tau_1,\xi_1]+ \sum_{i=1}^N\big([\tau_{2i-1}, \xi_{2i}]+[\xi_{2i-1}, \tau_{2i}]\big)\\ + \sum_{i=1}^b\big([\sigma_i, \tau_{n+i}] + \epsilon_i \pi\cdot \sigma_i \big)
\end{multline*}
when $n=2N$ is even and
\begin{multline*}
\delta_{j-1}^{\cal B}(\underline{\tau}) = \pi\cdot\tau_1+[\tau_1,\xi_1]+ \sum_{i=1}^N\big([\tau_{2i}, \xi_{2i+1}]+[\xi_{2i}, \tau_{2i+1}]\big)\\ + \sum_{i=1}^b\big([\sigma_i, \tau_{n+i}] + \epsilon_i \pi\cdot \sigma_i \big)
\end{multline*}
when $n=2N+1$ is odd---using the same notation as in the $q\neq 2$ case. For either parity of $n$, \cite[Proposition 5]{labute67}---with minor notational tweaks---shows that the image of $\delta_{j-1}^{\cal B}$ together with the elements $\pi^{j-1}\cdot \xi_i$ generates \grj{}. Actually the cited proposition is slightly more precise than this, but we won't need this precision here as it will be folded into the last step of the proof.

As one final piece of notation, for $\underline{\lambda}\in (4\Z[2])^n$ set 
\[e({\cal B}, \underline{\lambda}) = x_1^{\lambda_1}\cdots x_n^{\lambda_n} \]
Note that for any $\underline{t}$, $\underline{\epsilon}$ as above we have
\[ e({\cal B}'(\underline{t}, \underline{\epsilon}), \underline{\lambda})\equiv e({\cal B}, \underline{\lambda}) \modn{j+1}\]
where the extra precision modulo $G_{j+1}$ follows since the $\lambda_i$ are divisible by 4.

Now suppose that $j\geq 3$ there is a basis ${\cal B}_j$ of $G$ in standard form and some $\underline{\lambda}_{j-1}\in (4\Z[2])^n$ such that 
\[s_0 = e({\cal B}, \underline{\lambda})r_1({\cal B})z \]
where $z\in G_j$. This holds for $j=3$ by Proposition \ref{ModuloG3}. From above there exist $\underline{t}$, $\underline{\epsilon}$ and $\underline{\alpha}$ such that 
\[z\equiv x_1^{2^{j-1}\alpha_1} \cdots x_n^{2^{j-1}\alpha_n} d^{\cal B}_{j-1}(\underline{t}, \underline{\epsilon}) \modn{j+1}\]
If ${\cal B}_j = {\cal B}'_{j-1}(\underline{t}, \underline{\epsilon})$ then we have
\begin{eqnarray*}
s_0&\equiv& e({\cal B}, \underline{\lambda}_{j-1})r_1({\cal B})x_1^{2^{j-1}\alpha_1} \cdots x_n^{2^{j-1}\alpha_n} d^{\cal B}_{j-1}(\underline{t}, \underline{\epsilon}) \\
&\equiv& e({\cal B}_j, \underline{\lambda}_{j-1}+2^{j-1}\underline{\alpha})r_1({\cal B}_j)
\end{eqnarray*}
modulo $G_{j+1}$. As before the ${\cal B}_j$ converge to a standard basis for $G$; furthermore $\underline{\lambda}_j$ converges to some $\underline{\lambda}\in(4\Z[2])^n$. So passing to the limit we have found a standard-form basis ${\cal B}_\infty$ such that 
\[s_0 = e({\cal B}_\infty, \underline{\lambda}_{j-1})r_1({\cal B}_\infty)= e({\cal B}_\infty, \underline{\lambda}_{j-1})r_1(n, 0, \chi; {\cal X})s_1\cdots s_b\]  
Here we have written ${\cal B}_\infty={\cal X}\sqcup{\cal Y}$ where ${\cal X}$ denotes the first $n$ elements of ${\cal B}_\infty$. Now by Theorems \ref{CappingOff1} and \ref{CappingOff2} we find that $e({\cal B}_\infty, \underline{\lambda}_{j-1})r_1(n, 0, \chi; {\cal X})$ is in fact a Demushkin word in the free group $G'=\overline{\gp{\cal X}}$, and moreover the image of the orientation character of $G'$ equals the image of $\chi$. Therefore by Theorem \ref{DemusClass} there is a free basis ${\cal X}'$ of $G'$ such that 
\[e({\cal B}_\infty, \underline{\lambda}_{j-1})r_1(n, 0, \chi; {\cal X}) = r_1(n,0,\chi; {\cal X}') \]
Then if ${\cal B} = {\cal X}'\sqcup {\cal Y}$ then $\cal B$ is a basis of $G$ in standard form and $s_0=r_1(n,b,\chi;{\cal B})$ as required.
\end{proof}
\section{Splittings and the pro-$p$ curve complex}\label{secGimel}
An important tool in the study of surface automorphisms is the curve complex, which is a simplicial complex whose vertices are isotopy classes of essential simple closed curves on the surface. Translating into the language of group theory this may be rephrased as `a complex whose vertices are splittings of the surface group over \Z'. In this section we will define an analogous object for the pro-$p$ completion of a surface group, on which the automorphism group of the pro-$p$ group will act. One could no doubt study such an object for an arbitrary Demushkin group, but it is simpler and perhaps more interesting to restrict to the case of an orientable Demushkin group (i.e.\ the pro-$p$ completion of an orientable surface group).

For the rest of the section let $\Sigma$ be a compact orientable surface which is not a sphere, disc or cylinder. Let $\Gamma= \pi_1 \Sigma$ and let $G=\widehat{\Gamma}_{(p)}$ be the pro-$p$ completion of $\Gamma$. Let \famS\ be a family of subgroups of $G$ comprising the pro-$p$ completion of one representative of the fundamental group of each boundary component.
\begin{defn}
Define the {\em relative automorphism group} $\Aut_\bdy(G)$ to be the group of automorphisms of $G$ which send every $S\in \famS$ to a conjugate of itself. One may check that $\Aut_\bdy(G)$ is a closed subgroup of $\Aut(G)$ containing $\Inn(G)$. Define $\Out_\bdy(G)=\Aut_\bdy(G)/\Inn(G)$. Similarly for $\Gamma$. 
\end{defn}

\begin{defn}
Let $\gimel=\gimel(\Sigma)$ be the set of all equivalence classes of pairs $(T,\rho)$ with the following properties. 
\begin{itemize}
\item $T$ is a pro-$p$ tree and $\rho\colon G\to \Aut(T)$ is a continuous left action of $G$ on $T$ by graph automorphisms such that $G\lqt T$ is finite.
\item For every $e\in E(T)$ the edge stabiliser $\stab_\rho(e)$ is isomorphic to $\Z[p]$
\item If $G\lqt T$ is not a loop, then for every vertex $v\in V(T)$ and every edge $e$ incident to $v$ the edge stabiliser $\stab_\rho(e)$ is properly contained in the vertex stabiliser $\stab_\rho(v)$. 
\item Every $S\in\famS$ fixes a vertex of $T$, but does not fix any edge of $T$.
\end{itemize}
Two such objects are regarded as equivalent if there is a $G$-equivariant graph isomorphism from one to another. 

The automorphism group $\Aut_\bdy(G)$ acts on $\gimel(\Sigma)$ on the right by $(T,\rho)\cdot \psi=(T,\rho\circ \psi)$ where $\psi\in\Aut_\bdy(G)$. Note that if $\psi$ is an inner automorphism of $G$, then $(T,\rho)\cdot \psi$ is equivalent to $(T,\rho)$ so the action descends to an action of $\Out_\bdy(G)$ on $\gimel(\Sigma)$.
\end{defn}
The object $\gimel(\Sigma)$ will be referred to as the {\em pro-$p$ curve complex of $\Sigma$}. At present it is merely a set, but later we shall see that it has a topology making it into a profinite space such that the action of $\Aut_\bdy(G)$ is continuous. We will usually abuse notation by ignoring the fact that elements of $\gimel(\Sigma)$ are really equivalence classes and simply deal with the pairs $(T,\rho)$ themselves.

The definitions above parallel those for the abstract curve complex: indeed, if one replaces $G$ with $\Gamma$ and `pro-$p$ tree' with `simplicial tree' then one recovers the definition of the curve complex ${\cal C}(\Sigma)$ of $\Sigma$.

For our purposes we will adopt the following definition.
\begin{defn}
Given a set $X$, a {\em simplicial structure} on $X$ is a partial ordering $\leq$ on $X$ such that:
\begin{itemize}
\item For every $x\in X$ there exist only finitely many $y\in X$ such that $y\leq x$
\item Let $X^{(n)}$ be the set of elements $x\in X$ such that there are $n+1$ minimal elements of $X$ which are smaller than or equal to $x$. Then for each $x\in X^{(n)}$, $x$ is the join of these elements and the subposet $\{y\in X\mid y\leq x\}$ is isomorphic to the face poset of an $n$-simplex. We refer to $x\in X^{(n)}$ as an {\em $n$-simplex of $X$}.
\end{itemize} 
\end{defn}
The curve complex ${\cal C}(\Sigma)$ has a natural $\Aut_\bdy(\Gamma)$-equivariant simplicial structure which may be described as follows. For collections of isotopy classes of closed curves $x,y\in{\cal C}(\Sigma)$, we say $y\leq x$ if $y$ may be obtained by deleting some of the isotopy classes of closed curves in $x$. In terms of the action on dual trees, this corresponds to a $\Gamma$-equivariant epimorphism of trees which collapses some family of subtrees. Therefore we define a poset structure on $\gimel(\Sigma)$ as follows.
\begin{defn}
For $(T,\rho)$ and $(T',\rho')$ in $\gimel(\Sigma)$, we declare $(T,\rho)\geq (T',\rho')$ if and only if $T'$ is obtained from $T$ by collapsing some (equivariant) family of subtrees of $T$.
\end{defn}
An equivalent definition would be that for some family of disjoint connected full subgraphs $\{Y_i\}$ of $G\lqt T$, we obtain $T'$ from $T$ by collapsing each connected component of the pre-image of each $Y_i$ in $T$.
\begin{defn}
We define a {\em decomposition graph} of $\Sigma$ to be a finite connected graph $X$ with two labelling functions $n_\bullet, b_\bullet\colon V(X)\to \N$ such that:
\begin{itemize}
\item $\sum_{v\in V(X)} b_v$ is the number of boundary components of $\Sigma$
\item $\rk\Gamma= \sum_{v\in V(X)}(n_v+b_v)+2(1-\chi(X)) - \epsilon$ where $\epsilon$ is 1 if $\Sigma$ has boundary or 0 if $\Sigma$ is closed.
\item For a vertex $v$, if $n=0$ then either $X$ is a loop and $\Sigma$ is a torus or $b_v+\val(v)>2$
\end{itemize}
Let $D=D(\Sigma)$ be the set of decomposition graphs modulo isomorphism of labelled graphs.
\end{defn}

We will be aiming to prove the following theorem. Here we regard $\Aut_\bdy(\Gamma)$ as a subgroup of $\Aut_\bdy(G)$ in the natural way.
\begin{theorem}\label{ppCCThm}
Let $\Sigma$ be a compact orientable surface and let ${\cal C}(\Sigma)$ be the curve complex of $\Sigma$. There is an $\Aut_\bdy(\pi_1\Sigma)$-equivariant function $i\colon {\cal C}(\Sigma)\to \gimel(\Sigma)$ such that:
\begin{enumerate}[(1)]
\item $i$ is an injection. If $x,y\in{\cal C}(\Sigma)$ then $x\leq y$ if and only if $i(x)\leq i(y)$. Furthermore if $z\in \gimel(\Sigma)$ and $z\leq i(y)$ then $z=i(x)$ for some $x\leq y$.
\item $\leq$ is a simplicial structure on $\gimel(\Sigma)$ and $i$ preserves the simplicial structure
\item There is a natural diagram of bijections
\[\begin{tikzcd}[column sep = small]
{\cal C}(\Sigma)/\Aut_\bdy(\Gamma) \ar{rr}{i} \ar{dr}[swap]{\bar d} && \gimel(\Sigma)/\Aut_\bdy(G)  \ar{dl}{\bar \delta}  \\
&D(\Sigma)&
\end{tikzcd}\]
\end{enumerate}
\end{theorem}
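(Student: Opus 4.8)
The plan is to build the comparison map $i$ from the classical cut-surface splittings, to identify the two orbit spaces in part (3) with the set $D(\Sigma)$ of decomposition graphs, and then to deduce the order-theoretic statements (1) and (2) from the observation that collapsing pro-$p$ trees corresponds to passing to sub-multicurves. \emph{Construction of $i$.} Given a multicurve $x$ on $\Sigma$, cutting along $x$ presents $\Gamma=\pi_1\Sigma$ as the fundamental group of a finite graph of groups $\mathcal G_x=(X,\Gamma_\bullet)$ whose vertex groups are the (free) fundamental groups of the complementary subsurfaces and whose edge groups are the infinite cyclic subgroups carried by the curves of $x$. The key input is that this splitting is $p$-efficient: $\Gamma$ is residually $p$ and the geometric vertex and edge subgroups are $p$-separable and inherit their full pro-$p$ topologies. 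Hence pro-$p$ completing the vertex groups gives a proper graph of pro-$p$ groups with fundamental group $G$, into whose standard pro-$p$ tree $T_x$ the abstract Bass--Serre tree embeds $\Gamma$-equivariantly; set $i(x)=(T_x,\rho_x)$. That $(T_x,\rho_x)\in\gimel(\Sigma)$ is read off from the explicit description of the standard tree: edge stabilisers are the procyclic groups $\widehat{\langle c\rangle}_{(p)}$, the quotient is the finite graph $X$, each complementary piece (not being a disc or cylinder) has fundamental group strictly larger than any incident edge group, except for the annulus occurring when $\Sigma$ is a torus cut along one curve, where $X$ is a loop and no constraint is imposed, and each $S\in\famS$ fixes a vertex but no edge. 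Equivariance under $\Aut_\bdy(\Gamma)$ is clear, since such an automorphism carries $\mathcal G_x$ to $\mathcal G_{x\psi}$.

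\emph{The two maps to $D(\Sigma)$.} The map $\bar d$ is a bijection by the classical change-of-coordinates principle for surfaces: the $\Aut_\bdy(\Gamma)$-orbit of a multicurve both determines and is determined by the homeomorphism types of the complementary pieces, the gluing pattern, and the distribution of the boundary circles of $\Sigma$ among the pieces --- precisely the data of a decomposition graph. For $\bar\delta$ one must first see that an arbitrary $(T,\rho)\in\gimel(\Sigma)$ carries such a graph. Since the edge stabilisers are $\Z[p]$, the orientable PD$^1$ group, a Mayer--Vietoris argument for the $G$-action on $T$ together with the duality of $(G,\famS)$ shows that each vertex group $G_v$, equipped with the peripheral family consisting of its incident edge groups and the members of $\famS$ it contains, is a PD$^2$ pair, with orientation character the restriction of $\chi$; this restriction is trivial since $\Sigma$ is orientable. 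Now Theorem~\ref{MainThm}, specialised to trivial orientation character, identifies $(G_v,\famS_v)$ with the pro-$p$ completion of the fundamental-group pair of a compact orientable surface with boundary, determined by invariants $n_v$ (even) and $b_v=|\famS\cap G_v|$. Recording these as vertex labels of $G\backslash T$ produces the graph; the numerical constraints in the definition of $D(\Sigma)$ fall out of additivity of the relative Euler characteristic over the graph of groups, and the valence condition is exactly the proper-containment clause of $\gimel$. The triangle commutes, $\bar\delta\circ i=\bar d$, essentially by construction.

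\emph{Injectivity of $\bar\delta$ --- the main obstacle.} Suppose $(T,\rho),(T',\rho')\in\gimel(\Sigma)$ have the same decomposition graph $X$. Using the classification of the vertex groups as pro-$p$ surface pairs, I would glue honest compact orientable surfaces-with-boundary along the edges of $X$, obtaining a surface $\Sigma'$ and a multicurve $x'$ on it whose $p$-efficient geometric splitting pro-$p$ completes to exactly the graph of pro-$p$ groups underlying $(T,\rho)$; hence $G\cong\widehat{\pi_1\Sigma'}_{(p)}$. Now $\Sigma'$ has the same number of boundary components as $\Sigma$, namely $|\famS|$, is orientable, and has the same Euler characteristic --- the last point because $\rk G$ together with the boundary count pins down the genus (in the closed case $\dim H^1(G,\F_p)=2g$) --- so $\Sigma'\cong\Sigma$. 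The identification yields an isomorphism $G\to G$ carrying $(T,\rho)$ to $i(x')$, and tracking the peripheral subgroups through the homeomorphism $\Sigma'\cong\Sigma$ shows it lies in $\Aut_\bdy(G)$ (possibly after composing with a boundary-permuting homeomorphism within the relevant orbits). The same reconstruction applied to $(T',\rho')$ produces a multicurve $x''$ with the same decomposition graph, so $x'$ and $x''$ are $\Aut_\bdy(\Gamma)$-equivalent by injectivity of $\bar d$, and transporting back, $(T,\rho)$ and $(T',\rho')$ are $\Aut_\bdy(G)$-equivalent. Combined with bijectivity of $\bar d$ and with $\bar\delta\circ i=\bar d$, this forces $i\colon\mathcal C(\Sigma)/\Aut_\bdy(\Gamma)\to\gimel(\Sigma)/\Aut_\bdy(G)$ to be a bijection, giving (3). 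I expect the genuinely delicate points to be the bookkeeping of boundary circles and making the Mayer--Vietoris/duality argument identifying the vertex groups fully rigorous.

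\emph{Deducing (1) and (2).} A collapse of pro-$p$ trees $T_y\to T'$ deletes an equivariant family of edges, so the surviving edge stabilisers form a sub-family of those of $T_y$ and hence correspond to a sub-multicurve $x\subseteq y$; conversely, cutting $\Sigma$ along a sub-multicurve realises precisely such a collapse, so the collapse of $i(y)$ whose surviving edges are indexed by $x$ is exactly $i(x)$. This gives the ``furthermore'' clause of (1) --- if $z\in\gimel(\Sigma)$ and $z\le i(y)$ then $z=i(x)$ for the corresponding $x\le y$ --- and, specialising to $z=i(x)$, both directions of $x\le y\iff i(x)\le i(y)$. Injectivity of $i$ itself follows once one observes that a $G$-equivariant isomorphism $T_x\to T_y$ restricts to a $\Gamma$-equivariant isomorphism of the embedded discrete Bass--Serre trees (whose orbits are intrinsically characterised using $p$-efficiency), whence $x=y$ by the corresponding fact for $\mathcal C(\Sigma)$. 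Finally, $\Aut_\bdy(G)$ acts on $\gimel(\Sigma)$ by automorphisms of the poset $(\gimel(\Sigma),\le)$, and by (3) every element lies in the orbit of some $i(y)$; for such an element the down-set $\{z\le i(y)\}$ equals $\{i(x):x\le y\}$, which is the face poset of the simplex spanned by $y$ in $\mathcal C(\Sigma)$. Hence $\le$ is a simplicial structure on $\gimel(\Sigma)$, and $i$, being order-preserving and order-reflecting, preserves it.
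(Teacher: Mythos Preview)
Your overall architecture matches the paper's --- construct $i$ via $p$-efficiency, define $d$ and $\delta$, prove the triangle commutes, and deduce (1) and (2) from (3) --- but there are two genuine gaps where the paper does real work that your sketch does not.

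\textbf{Injectivity of $\bar\delta$.} Your reconstruction step asserts that after identifying each vertex pair $(G_v,\famS_v)$ with the pro-$p$ completion of a surface pair, gluing the surfaces along $X$ reproduces the given graph of pro-$p$ groups. This is not automatic: once the identifications $\phi_v$ are fixed, each edge map $\partial_i\colon G_e\to G_{d_i(e)}$ sends a generator of $G_e$ to $\mu_{e,i}$ times a geometric boundary generator for some $\mu_{e,i}\in\Z_p^\times$, and the gluing is geometric only when $\mu_{e,1}/\mu_{e,0}=\pm 1$. One cannot repair this by re-choosing the $\phi_v$ independently: a short calculation in $H_1(G_v;\Z_p)$ (using $[s_0]=\sum_{i\ge 1}[s_i]$) shows that every element of $\Aut_\bdy(G_v)$ scales \emph{all} peripheral generators by the same unit, so these twist ratios are rigid. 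The paper sidesteps this by never trying to make a single splitting geometric in isolation. Instead it compares two splittings $(T,\rho),(T',\rho')$ with the same decomposition graph by induction on the number of edges of $X$: remove one edge, apply the inductive hypothesis to the pieces, and glue back. The separating-edge case glues directly; for a non-separating (HNN) edge the residual discrepancy is a single unit $\lambda\in\Z_p^\times$, and the paper proves $\lambda=1$ by a diagram chase through the boundary map $H_2(G_1,\famS_1;\Z_p)\to H_1(d_i(G_e);\Z_p)$ and the conjugation-by-stable-letter isomorphisms. This homological step is exactly what your argument is missing.

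\textbf{Injectivity of $i$.} The claim that a $G$-equivariant isomorphism $T_x\to T_y$ must restrict to the embedded discrete Bass--Serre trees is not justified. Such an isomorphism sends the base vertex $1\cdot G_v$ to some $g\cdot G'_{v'}$ with $G_v=gG'_{v'}g^{-1}$, and nothing forces $g\in\Gamma G'_{v'}$; ``$p$-efficiency'' identifies $\Gamma_v$ with $\Gamma\cap G_v$ but does not single out the $\Gamma$-orbit of the identity coset among all $\Gamma$-orbits in $G/G_v$. The paper argues differently, at the level of edge stabilisers: an equivalence $i(x)=i(y)$ forces each curve generator $x_e$ to be $G$-conjugate to $y_{f(e)}^\kappa$ for some $\kappa\in\Z_p^\times$, and one must then show $\kappa=\pm 1$. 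For a non-separating curve one uses a homomorphism $G\to\Z_p$ sending $\Gamma$ into $\Z$ and $x_e$ to $1$ (so $\kappa^{-1}\in\Z$, and symmetry gives $\kappa\in\Z$); for a separating curve one maps to unipotent $3\times 3$ matrices over $\Z_p$ instead. With $\kappa=\pm 1$ established, conjugacy $p$-separability of $\Gamma$ finishes the argument. Neither ingredient appears in your sketch.
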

\begin{rmk}
Part (3) of the theorem is perhaps the most interesting part, and may be paraphrased as `any splitting of $G$ with edge groups $\Z[p]$ is equivalent under $\Aut_\bdy(G)$ to a geometric splitting'. 
\end{rmk}
\begin{proof}[Construction of $i$]
Let $x$ be an $n$-simplex in the curve complex---that is, an isotopy class of collections of $n+1$ disjoint essential simple closed curves on $\Sigma$ which may be made disjoint. Such a collection (together with various choices of base-points) induces a graph-of-groups splitting of $\Gamma$ where all edge groups are copies of $\Z$, where peripheral subgroups are conjugate into vertex groups but not edge groups, and where edge groups are not equal to adjacent vertex groups except in the case of a torus with a single non-separating simple closed curve. 

This splitting is $p$-efficient by, for instance, \cite[Proposition 3.8]{Wilkes16}. Therefore by Propositions 6.5.3 and 6.5.4 of \cite{Ribes17} there is an induced splitting of $G$ as an injective graph of pro-$p$ groups with procyclic edge groups. The standard tree dual to this splitting (see \cite[Theorem 6.5.2]{Ribes17}) is a representative of an element $i(x)$ of $\gimel(\Sigma)$. Taking different choices of basepoints does not affect the equivalence class of $i(x)$ in $\gimel(\Sigma)$. Furthermore the abstract tree $T^{\rm abs}$ dual to the splitting of $\pi_1 \Sigma$ along $x$ is naturally embedded as an abstract subgraph of $T$ which is dense in the topology on $T$ by \cite[Proposition 6.5.4]{Ribes17}.
\end{proof}

\begin{proof}[Construction of $d$ and $\delta$]
There is a function $d\colon{\cal C}(\Sigma)\to D(\Sigma)$ defined as follows. Given a representative of an element of the curve complex (i.e.\ a collection $C$ of disjoint essential simple closed curves on $\Sigma$) there is a graph $X$ dual to these curves, with one vertex in each component of the complement of these curves and one edge for each curve. For a component $v$ of $\Sigma\smallsetminus C$ set $n_v$ to be half the genus of $v$ and $b_v$ to be the number of boundary components of $\Sigma$ lying in $v$. One easily verifies that this is a decomposition graph for $\Sigma$. This defines the map $d$. Clearly $d$ is $\Aut_\bdy(\Gamma)$-invariant and therefore gives a map $\overline{d}\colon {\cal C}(\Sigma)/\Aut_\bdy(\Gamma)\to D(\Sigma)$. By the standard theory of surfaces (see for example \cite[Chapter 1]{FM11}) this is a bijection.

We may also define a map $\delta\colon \gimel(\Sigma) \to D(\Sigma)$. Let $(T,\rho)$ represent an element of $\gimel(\Sigma)$. Then $G\lqt T=X$ is a finite connected graph. Choose a maximal subtree $Y$ of $X$ and a function $s\colon X\to T$ which is a section of the quotient map and such that $sd_0(x)=d_0(s(x))$ for all $x\in X$ and $sd_1(x)=d_1s(x)$ for all $x\in Y$. Such a map $s$ is a {\em fundamental 0-section} in the language of \cite{Ribes17}. Given $s$ we may form a graph of groups ${\cal G}= (X,G_\bullet)$ as in  \cite[Section 6.6]{Ribes17}, whose vertex and edge groups $G_x$ are the stabilisers of the points $s(x)$. This is an injective graph of pro-$p$ groups whose fundamental pro-$p$ group is $G$. Every $S\in\famS$ is conjugate into exactly one of the $G_v$. Replace every $S$ by some such conjugate and for $v\in V(X)$ let $\famS_v$ be the collection of elements of $S$ lying in $G_v$. Further let ${\cal E}_v$ be the collection of edge groups of $\cal G$ incident to $v$. Since $(G,\famS)$ is a PD$^2$ pair, \cite[Theorem 5.18]{Wilkes17} shows that $(G_v, \famS_v\cup {\cal E}_v)$ is also an orientable PD$^2$ pair for every $v\in V(x)$. Therefore $G_x$ is a free pro-$p$ group of rank $n_v+|\famS_v|+|{\cal E}_v|-1$ for some $n_v\in \N$. Set $b_v = |\famS_v|$. We have now defined a decomposition graph $\delta(T,\rho)$. 

One may also characterise these invariants as follows: $|{\cal E}_v|$ is the number of $G_v$-orbits of edges of $T$ incident to $s(v)$, and $b_v$ is the number of $G_v$-conjugacy classes of the conjugates of elements of $S$ that lie in $G_v$. Finally $n_v$ is defined as $\rk G_v-|\famS_v|-|{\cal E}_v|+1$ as before. This shows not only that $\delta(T,\rho)$ is independent of the choices of $Y$ and $s$ but is also $\Aut_\bdy(G)$-invariant. Hence we also have a map $\overline\delta\colon \gimel(\Sigma)/\Aut_\bdy(G)\to D(\Sigma)$. 
\end{proof}

\begin{proof}[Proof of (3)]
The first characterisation of $\delta$ in terms of graphs of groups shows immediately that $\delta\circ i = d$. This immediately shows that $\delta$ is a surjection. So to show that $i$ induces the required bijection it remains to show that $\overline{\delta}$ is an injection.

For a finite graph $X$ let $D_X(\Sigma)$ be the set of decomposition graphs whose underlying graph is $X$, and let $\gimel_X(\Sigma)$ be the set of elements $(T,\rho)$ of $\gimel(\Sigma)$ with $G\lqt T\iso X$. Then $\overline\delta$ breaks into a disjoint union of maps $\gimel_X(\Sigma)/\Aut_\bdy(G) \to D_X(\Sigma)$ and it suffices to prove that each of these is an injection. We do this by induction on the number of edges of $X$, over all surfaces $\Sigma$ simultaneously. The idea of the induction is simple. Each action on a tree may be represented by a graph of groups, which may be thought of as a smaller graph of groups plus one additional edge. The induction hypothesis allows us to identify the smaller graphs of groups, and we may then glue back up to recover the full decompositions. The precise proof requires much notation. 

Let $(T,\rho), (T', \rho')\in \gimel_X(\Sigma)$ and suppose $\delta(T,\rho)=\delta(T', \rho')$. Choose an edge $e$ of $X$ and let the component(s) of $X\smallsetminus e$ be $X_1$ (and $X_2$). If $e$ is separating we order the indices so that $d_0(e)\in X_1$. Further choose graphs of groups as above corresponding to $T$ and $T'$. Let the fundamental pro-$p$ groups of the restricted graphs of groups over $X_1$ (and $X_2$) be $G_1$ and $G_1'$ (and $G_2$ and $G_2'$ respectively). After replacing the $S\in\famS$ by conjugates if necessary, each member of \famS\ lies in exactly one of $G_1$  or $G_2$. Let $\famS_1$ consist of $d_0(G_e)$ (and $d_1(G_e)$ if $e$ is non-separating) together with those elements of $\famS$ lying in $G_1$. If $e$ is separating then set $\famS_2$ to be the elements of \famS\ lying in $G_2$ together with $d_1(G_e)$. Similarly define $\famS_i'$. Finally choose generators $s_0$ and $s_0'$ of $G_e$ and $G'_e$. If $e$ is non-separating denote a choice of stable letter for the HNN extension $G_1\amalg_{G_e}$ by $t$ so that $t^{-1}d_0(s_0)t=d_1(s_0)$. Similarly for $t'$.

By collapsing $X_1$ and $X_2$ and applying \cite[Theorem 5.18]{Wilkes17} we find that $(G_i,\famS_i)$ is a PD$^2$ pair. The rank and number of boundary components may be read off immediately from $\delta(T,\rho)=\delta(T',\rho')$. Both pairs are orientable so the classification of PD$^2$ pairs (Theorem \ref{MainThm}) shows that $(G_i,\famS_i)$ and $(G'_i,\famS'_i)$ are both isomorphic as pairs (after possibly changing the peripheral groups by conjugacies) to the pro-$p$ completion of the fundamental group of some compact surface $\widetilde\Sigma_i$.  

Now over $X_i$ we have graph of groups decompositions $U_i,U_i'\in \gimel_{X_i}(\widetilde\Sigma)$ of $(G_i,\famS_i)$ and $(G_i',\famS_i')$. Since $\delta(T,\rho)=\delta(T',\rho')$ and the graphs of groups are restrictions of those for $(G,\famS)$ it follows that $\delta(U)=\delta(U')$. By induction we therefore have an isomorphism $\psi_i\colon G_i\to G_i'$ (relative to $\famS_i$ and $\famS'_i$) such that $\psi_i(G_x)$ is a $G_i$-conjugate of $G'_x$ for each $x\in X_i$. Applying an inner automorphism we may assume $\psi_1(d_0(s_0))=d_0(s'_0)$ and, if $e$ is separating, that $\psi_2(d_1(s_0))=d_1(s'_0)$. When $e$ is separating we may therefore glue these maps together (by the universal property of amalgamated free products) to give $\psi\in\Aut_\bdy(G)$ such that $\psi(G_x)$ is a $G$-conjugate of $G'_x$ for all $x\in X$. Therefore the two graphs of groups decompositions $(X, \psi(G_\bullet))$ and $(X,G'_\bullet)$ differ only by the choice of 0-section, which does not affect the Bass-Serre tree (see \cite[Section 6.6]{Ribes17}). Therefore $(T,\rho)\cdot\psi=(T',\rho')$ as required.

It only remains to show that in the case when $e$ is separating we must show that the automorphism $\psi$ of $G_1$ extends across the HNN extension to give an automorphism of $G$. The construction above ensured that $\psi_1(d_0(s_0))=d_0(s'_0)$ and that $\psi_1(d_1(s_0))=(d_1(s'_0)^\lambda)^g$ for some $\lambda\in\U_p$ and $g\in G_1$. We must show that $\lambda=1$, for then $t\mapsto t'g$ gives the required extension of $\psi_1$ to $\psi\in\Aut_\bdy(G)$. We have a diagram of maps
\[\begin{tikzcd}
H_1(d_0(G_e)) \ar{dd}{{\rm conj}(t)}[swap]{\iso} \ar{rrr}{\psi_\ast}[swap]{\iso} & & & H_1(d_0(G'_e)) \ar{dd}{{\rm conj}(t')}[swap]{\iso} \\
& H_2(G_1,\famS_1) \ar{r}{\psi_\ast}[swap]{\iso} \ar[twoheadrightarrow]{lu} \ar[twoheadrightarrow]{ld} & H_2(G'_1,\famS'_1) \ar[twoheadrightarrow]{ru} \ar[twoheadrightarrow]{rd} & \\
H_1(d_1(G_e)) \ar{rrr}{{\rm conj}(g^{-1})\circ \psi_\ast}[swap]{\iso} &  & & H_1(d_1(G'_e))
\end{tikzcd}\]
with coefficients in $\Z[p]$. Here ${\rm conj}(x)$ denotes a map induced by conjugacy by $x\in G$. The two triangles in this diagram commute because of the segment
\[H_2(G,\famS_1) \longrightarrow H_2(G_1,\famS_1) \stackrel{\star}{\longrightarrow} H_1(G_e) \]
of the (dual of) the long exact sequence in \cite[Theorem 4.11]{Wilkes17}, where the starred map is the composition
\[H_2(G_1,\famS_1)\to H_1(\famS_1) \to H_1(d_0(G_e))\oplus H_1(d_1(G_e)) \xrightarrow{d_1^{-1}-d_0^{-1}} H_1 (G_e)  \]
Therefore the entire diagram commutes. Tracing $d_0(s_0)$ around the outer rectangle shows that $\lambda=1$ as required.
\end{proof} 
\begin{proof}[Proof of (1)]
Let us prove that $i$ is an injection. Suppose $i(x)=i(y)$. In order to show $x=y$ it suffices to show that the edge stabilisers of $x$ and $y$ are pairwise conjugate in $\Gamma$, since a conjugacy class in $\Gamma$ determines a simple closed curve on $\Sigma$ up to isotopy, and a family of simple closed curves determines a splitting in ${\cal C}(\Sigma)$. Let graphs of groups corresponding to $x$ and $y$ be $(X, \Gamma_\bullet)$ and $(Y, \Gamma'_\bullet)$ respectively. For each edge in $X$ (or $Y$) choose a generator $x_e$ for $\Gamma_e$ (respectively $y_e\in \Gamma'_e$). The isomorphism of profinite trees witnessing $i(x)=i(y)$ gives an isomorphism $f\colon X\to Y$ such that $x_e$ is conjugate in $G$ to $y_f(e)^\kappa$ for some $\kappa\in\Z[p]^{\!\times}$. Since $\Gamma$ is conjugacy $p$-separable \cite{Paris09, Wilkes16}, to prove that $x_e$ and $y_{f(e)}$ are conjugate in $\Gamma$ it suffices to prove that $\kappa=\pm 1$.

If $e$ is a non-separating edge then there exists a homomorphism $\phi\colon G\to \Z[p]$ sending $\Gamma$ to $\Z$ and $x_e$ to 1. Since $\phi(x_e)$ is central the image of $\phi(y_{f(e)})=\kappa^{-1}$; which was in $\Z$ by definition of $\phi$. By symmetry we find $\kappa\in \Z$ also; so $\kappa\in\Z^{\!\times}=\{\pm 1\}$. If $e$ is separating then there exists a map to the group of upper triangular $3\times 3$ matrices with entries in $\Z[p]$ which maps $\Gamma$ to integer matrices and sends $x_e$ to a central element; the same argument yields $\kappa=\pm 1$.

This concludes the proof that $i$ is an injection.

%Now suppose $i(x)= i(y)$. Let the discrete tree dual to $x$ be $T^{\rm abs}$ and that dual to $y$ be $U^{\rm abs}$, and let $T$ and $U$ be the trees for $i(x)$ and $i(y)$. As mentioned above, $T^{\rm abs}$ is embedded in $T$ (and similarly for $U$). Take a $G$-equivariant isomorphism $f\colon T\to U$ and identify $X=G\lqt T$ with $G\lqt U$ via $f$. Take a maximal subtree $Y$ of $X$ and fundamental 0-sections $s\colon X\to T^{\rm abs}$, $s'\colon X\to U^{\rm abs}$ lifting $Y$---which of course also provide 0-sections for $T$ and $U$. Then $fs$ and $s'$ are both fundamental 0-sections for $U$ lifting the same tree $Y$. Then by \cite[Theorem 6.6.1]{Ribes17} (or rather its proof) there is a $G$-equivariant isomorphism $h\colon U\to U$ taking $fs$ to $s'$. Then since $T^{\rm abs}=\Gamma\cdot s(X)$ and $U^{\rm abs}=\Gamma\cdot s'(X)$ the map $hf$ restricts to an equivariant graph isomorphism $T^{\rm abs}\to U^{\rm abs}$ and we are done.

To show that $x\leq y$ implies $i(x)\leq i(y)$ note that the graph of groups decomposition of $\Gamma$ corresponding to $x$ is obtained from the decomposition corresponding to $y$ by collapsing some sub-graphs-of-groups; the same collapses exhibit $i(x)\leq i(y)$.

Also, if $z\leq i(y)$ then at the level of graphs of groups $z$ is obtained from $i(y)$ by collapsing some sub-graphs-of-groups; such collapses are precisely those induced by collapses of the graph of discrete groups corresponding to $y$. This shows that $z=i(x)$ for some (unique) $x\leq y$.
\end{proof}

\begin{proof}[Proof of (2)]
Part (2) follows from (1) and (3). For the notion of simplicial structure consists of conditions on $\{y\in \gimel(\Sigma)\mid y\leq x\}$ for $x\in\gimel(\Sigma)$. Part (1) and the simplicial structure on ${\cal C}(\Sigma)$ show that these conditions hold when $x\in i({\cal C}(\Sigma))$. Since the partial order is $\Aut_\bdy(G)$-invariant and every point in $\gimel(\Sigma)$ may be translated to a point in $i({\cal C}(\Sigma))$ via the action of $\Aut_\bdy(G)$ this shows that the simplicial structure conditions hold at every point of $\gimel(\Sigma)$. This proves (2).
\end{proof}

Notice that $\gimel(\Sigma)$ has a natural topology making it into a profinite space on which $\Aut_\bdy(G)$ acts continuously. For choosing any lift $s\colon D(\Sigma)\to \gimel(\Sigma)$ splitting the quotient map defines a surjection $\Aut_\bdy(G)\times D(\Sigma)\to \gimel(\Sigma)$, to which we may give the quotient topology. This is homeomorphic to the disjoint union
\[\gimel(\Sigma) = \bigsqcup_{z\in D(\Sigma)} \stab(s(z)) \lqt\Aut_\bdy(G)\]
so that $\gimel(\Sigma)$ is a profinite space. Because $D(\Sigma)$ is finite this topology does not depend on the choice of $s$.

\section{The $p$-congruence topology on the mapping class group}\label{secpCongTop}
In this section $\Sigma$ will be a closed orientable surface of genus at least 2.

We conclude this paper with some remarks about separability of certain subgroups of the mapping class group and their relation to the curve complex. A `separability property' of a subgroup $\Delta$ in some group $\Gamma$ relates to closedness in some profinite topology on $\Gamma$. The standard notion of separability concerns the full profinite topology on $\Gamma$, whose basic open sets consist of all cosets of finite index subgroups of $\Gamma$---{\it i.e.}\ the topology induced from the map from $\Gamma$ into its profinite completion. Separability of various subgroups of $\Out(\Gamma)$ with respect to the full profinite topology were considered in \cite{LM07}. One may also consider for instance the pro-$p$ topology on $\Gamma$ to give the notion of $p$-separability. 

In the context of this paper it is natural to discuss the topology $\cal T$ on $\Out(\Gamma)$ induced by the natural map into $\Out(G)$. One may refer to this as the {\em $p$-congruence topology}, since it concerns `$p$-congruence subgroups of $\Out(\Gamma)$'---the kernels of maps to $\Out(P)$ where $P$ is a characteristic $p$-group quotient of $\Gamma$. This is not quite a pro-$p$ topology, but is virtually pro-$p$ (see \cite[Proposition 5.5]{DDMS}). Note that this topology is rather weaker than the full profinite topology.

\begin{theorem}
The $p$-congruence topology on $\Out(\Gamma)$ is Hausdorff and stabilisers of multicurves are closed.
\end{theorem}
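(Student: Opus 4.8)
The plan is to reduce the Hausdorff assertion to injectivity of the natural map $\Theta\colon\Out(\Gamma)\to\Out(G)$, and to deduce closedness of multicurve stabilisers formally from the pro-$p$ curve complex of Theorem \ref{ppCCThm}. First note that $\cal T$ is by construction the initial topology on $\Out(\Gamma)$ pulled back along $\Theta$ from the profinite topology on $\Out(G)$ (here $\Aut(G)$ is a profinite group because $G$ is finitely generated pro-$p$, and $\Inn(G)$ is closed in $\Aut(G)$ as the continuous image of the compact group $G$, so $\Out(G)$ is profinite). Since $\Out(G)$ is Hausdorff, $\cal T$ is Hausdorff if and only if $\Theta$ is injective, i.e.\ if and only if the intersection of all $p$-congruence subgroups of $\Out(\Gamma)$ is trivial. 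So the first task is to show $\Theta$ injective.

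For this I would argue as follows. Suppose $\varphi\in\Aut(\Gamma)$ induces an inner automorphism of $G$. Then $\gamma$ and $\varphi(\gamma)$ are conjugate in $G$ for every $\gamma\in\Gamma$, hence conjugate in $\Gamma$ by the conjugacy $p$-separability of surface groups \cite{Paris09, Wilkes16}; that is, $\varphi$ is class-preserving. Under the Dehn--Nielsen--Baer isomorphism $\Out(\Gamma)\cong\mathrm{Mod}^{\pm}(\Sigma)$, the class of $\varphi$ is then a mapping class fixing the isotopy class of every essential simple closed curve, so by the faithfulness of the action of the extended mapping class group on the curve complex \cite{FM11} it is either trivial or --- when $\Sigma$ has genus $2$ --- the hyperelliptic involution. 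But a class-preserving automorphism acts trivially on $H_1(\Sigma;\Z)=\Z^{2g}$, whereas the hyperelliptic involution acts by $-1$; hence $\varphi\in\Inn(\Gamma)$, $\Theta$ is injective, and $\cal T$ is Hausdorff. I expect this passage from ``class-preserving'' to ``inner'' to be the main substantive point; the rest is essentially formal.

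For the statement on multicurves, let $c$ be a multicurve on $\Sigma$, regarded as a simplex $x$ of the curve complex ${\cal C}(\Sigma)$, and let $\mathrm{Stab}(c)\leq\Out(\Gamma)=\mathrm{Mod}^{\pm}(\Sigma)$ be its setwise stabiliser (the pointwise stabiliser fixing each component is a finite intersection of such stabilisers of single curves, hence also closed, so one simplex suffices). Let $i\colon{\cal C}(\Sigma)\to\gimel(\Sigma)$ be the $\Out(\Gamma)$-equivariant injection of Theorem \ref{ppCCThm}. Injectivity and equivariance of $i$ yield, for $\psi\in\Out(\Gamma)$, the equivalence $\psi\cdot x=x$ if and only if $\Theta(\psi)\cdot i(x)=i(x)$, whence
\[\mathrm{Stab}(c)=\Theta^{-1}\bigl(\mathrm{Stab}_{\Out(G)}(i(x))\bigr).\]
Since $\gimel(\Sigma)$ is a profinite, hence Hausdorff, space on which $\Out(G)$ acts continuously, the orbit map $\psi\mapsto\psi\cdot i(x)$ is continuous, so $\mathrm{Stab}_{\Out(G)}(i(x))$ --- the preimage of the closed point $i(x)$ --- is closed in $\Out(G)$. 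As $\cal T$ is the initial topology along $\Theta$, the preimage $\mathrm{Stab}(c)$ of this closed set is $\cal T$-closed, as required.
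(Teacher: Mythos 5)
Your argument is correct. The multicurve half is essentially the paper's own proof: both identify the stabiliser of a simplex $x$ of ${\cal C}(\Sigma)$ with the $\Theta$-preimage of $\mathrm{stab}(i(x))$, which is closed because $\gimel(\Sigma)$ is a profinite space with continuous action (the paper goes slightly further and also treats stabilisers of \emph{oriented} multicurves, using $\Gamma/\Gamma_4$ to distinguish $g$ from $g^{-1}$; that is beyond the literal statement). Where you genuinely diverge is the Hausdorff claim: the paper simply cites \cite{Paris09} for the injectivity of $\Out(\Gamma)\to\Out(G)$, offering ``faithfulness of the action of $\Out(\Gamma)$ on ${\cal C}(\Sigma)$'' plus Theorem \ref{ppCCThm} as an alternative, whereas you supply an actual argument: conjugacy $p$-separability makes any automorphism inducing an inner automorphism of $G$ class-preserving on $\Gamma$, hence trivial on ${\cal C}(\Sigma)$, and the kernel of the action of the extended mapping class group on ${\cal C}(\Sigma)$ is trivial except for the genus-$2$ hyperelliptic involution, which you rule out by its $(-1)$-action on $H_1$. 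This buys two things: it makes the proof self-contained modulo conjugacy $p$-separability and the Alexander method, and it patches the genus-$2$ case, where the action on ${\cal C}(\Sigma)$ is in fact not faithful and the paper's alternative sketch is too quick. The trade-off is that the passage from ``class-preserving'' to ``inner'' is essentially the content of the result of \cite{Paris09} being cited, so you are reproving that input rather than bypassing it; either way the theorem is established.
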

\begin{proof}
The fact that this topology is Hausdorff is exactly the statement that $\Out(\Gamma)$ embeds into $\Out(G)$. This was shown in \cite{Paris09}, but also follows from the faithfulness of the action of $\Out(\Gamma)$ on ${\cal C}(\Sigma)$ and Theorem \ref{ppCCThm}. 

`Multicurve stabilisers' are the stabilisers of finitely many conjugacy classes of cyclic subgroups in $\Gamma$ coming from simple closed curves on $\Sigma$. Such a subgroup $\Delta$ is a stabiliser of a point $x\in {\cal C}(\Sigma)$. If $g\in \Gamma\smallsetminus\Delta$ then $g\cdot x\neq x$, hence $g\cdot i(x)\neq i(x)$. Thus $g$ lies outside the closed subgroup ${\rm stab}(i(x))$ of $\Out(G)$, hence lies outside the closure of $\Delta$ in $\Gamma$.

One may also consider the stabilisers of finitely many conjugacy classes of elements $g_i$ in $\Gamma$ corresponding to simple closed curves on $\Sigma$---that is, to stabilise the multicurve fixing a choice of orientation on the curves. These groups are also separable. It only remains to separate out those outer automorphisms sending some $g_i$ to a conjugate of $g_j^{-1}$ from those sending $g_i$ to a conjugate of $g_j$. But all such $g_j$, which are given by simple closed curves, are distinguished from their inverses by the map from $\Gamma$ to $\Gamma / \Gamma_4$, where $\Gamma_4$ is the fourth term in the lower central $p$-series. (If $p\neq 2$ then $\Gamma/\Gamma_3$ suffices.) Hence the $p$-congruence topology separates these subgroups too.
\end{proof}

For the remainder of the paper we will consider $\Out^+(\Gamma)$, which coincides with the mapping class group ${\rm MCG}(\Sigma)$ of $\Sigma$ \cite[Theorem 8.1]{FM11} and consists of automorphisms which are `orientation-preserving'. This is an open subgroup of $\Out(\Gamma)$ in the $p$-congruence topology, consisting of those automorphisms acting trivially on $H^2(G; \Z/p^2)$.

Leininger and McReynolds \cite{LM07} also consider separability of the `geometric subgroups' of $\Out^+(\Gamma)$ defined by Paris and Rolfsen \cite{PR99}, which are those deriving from subsurfaces of $\Sigma$. Let $\Sigma'$ be an incompressible proper subsurface of $\Sigma$, where `incompressible' means that each component of $\Sigma'$ is $\pi_1$-injective. The geometric subgroup ${\cal G}(\Sigma')$ of ${\rm MCG}(\Sigma)$ corresponding to $\Sigma'$ consists of those self-diffeomorphisms of $\Sigma$ isotopic to a map fixing $\overline{\Sigma\smallsetminus\Sigma'}$ pointwise. As shown in \cite{PR99} this subgroup is in fact the image of the mapping class group of $\Sigma'$ under the obvious homomorphism.

We claim that geometric subgroups are closed in the $p$-congruence topology. Indeed, the geometric subgroup consists of the intersection of finitely many subgroups of the form $\Delta_g$,  where $\Delta_g$ is the set of outer automorphisms fixing the conjugacy class of $g$. Equivalently $\Delta_g$ consists of those self-homeomorphisms of $\Sigma$ which, up to isotopy, fix a simple closed curve corresponding to $g$ pointwise. We have already seen that such subgroups $\Delta_g$ are separable, so it follows that the geometric subgroup is also separable.

Consider the following family of simple closed curves on $\Sigma$:
\begin{itemize}
\item the boundary components $b_i$ of components of $\Sigma'$;
\item curves $l_i$ decomposing the components of $\overline{\Sigma\smallsetminus\Sigma'}$ into pairs-of-pants;
\item for each $i$ a curve $k_i$ transverse to $l_i$, meeting it exactly once, and disjoint from all other $l_j$ and $b_j$.
\end{itemize}
We claim that the geometric subgroup is equal to the intersection of the groups $\Delta_g$ corresponding to these curves. Suppose a self-diffeomorphism $\phi$ of $\Sigma$ fixes all the $b_i$, $l_i$ and $k_i$ pointwise. Since $\phi$ is orientation-preserving, it preserves the components of $\Sigma\smallsetminus\{b_i, l_i\}$. these components either lie in $\Sigma'$ or are annuli or pairs of pants. Since the mapping class group of a pair of pants (or annulus) is abelian and generated by Dehn twists about the boundary components \cite[Section 3.6.4]{FM11}, $\phi$ is isotopic to a product of Dehn twists in the $b_i$ and $l_i$ (and these Dehn twists commute).  If this product involves a Dehn twist about some $l_i$, then $\phi$ cannot possibly fix $k_i$. It follows that $\phi$ is a product of Dehn twists in the $b_i$. By an isotopy supported near the $b_i$ we may now `push these Dehn twists into $\Sigma'$' and find that $\phi\in {\cal G}(\Sigma')$. Hence:
\begin{theorem}
Geometric subgroups of ${\rm MCG}(\Sigma)$ are closed in the $p$-congruence topology.
\end{theorem}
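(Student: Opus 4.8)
The plan is to realise each geometric subgroup $\mathcal{G}(\Sigma')$ as an intersection of finitely many subgroups of the form $\Delta_g$, where $\Delta_g\leq\Out^+(\Gamma)$ denotes the stabiliser of the conjugacy class of an element $g$ arising from a simple closed curve on $\Sigma$. Each such $\Delta_g$ is the stabiliser of a single curve, hence closed in the $p$-congruence topology by the preceding theorem; since a finite intersection of closed sets is closed, the result follows the moment such an identification is established. So the real content is to pin down a finite family of curves whose joint stabiliser is exactly $\mathcal{G}(\Sigma')$.

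The curves to use are: the boundary components $b_i$ of the pieces of $\Sigma'$; a system $\{l_i\}$ of curves cutting each component of $\overline{\Sigma\smallsetminus\Sigma'}$ into pairs of pants; and, for each $l_i$, a curve $k_i$ meeting $l_i$ exactly once and disjoint from every other $l_j$ and $b_j$. Write $\Delta$ for the intersection of the $\Delta_g$ as $g$ ranges over the conjugacy classes of this family. The inclusion $\mathcal{G}(\Sigma')\subseteq\Delta$ is immediate, since a mapping class supported inside $\Sigma'$ fixes every curve of the family up to isotopy. For the reverse inclusion I would take $\phi\in\Delta$, isotoped so as to fix each $b_i$, $l_i$ and $k_i$; cutting $\Sigma$ along the $b_i$ and $l_i$ and using that $\phi$ is orientation-preserving, $\phi$ preserves each complementary piece, and the pieces outside $\Sigma'$ are pairs of pants or annuli, whose mapping class groups are abelian and generated by boundary Dehn twists. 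Hence $\phi$ is isotopic to a product of twists about the $b_i$ and $l_i$; the dual curves $k_i$ force the $l_i$-exponents to vanish, leaving a product of $b_i$-twists, which an isotopy supported near $\partial\Sigma'$ pushes into $\Sigma'$, so $\phi\in\mathcal{G}(\Sigma')$.

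The step I expect to be the main obstacle is precisely this last topological argument: one must cut along the fixed curves to reduce to the complementary pants and annuli, invoke the structure of their mapping class groups, and then verify carefully both that the chosen $k_i$ genuinely obstruct all twisting about the $l_i$ and that the surviving boundary twists can be absorbed into $\Sigma'$. Everything surrounding it is formal — the passage from ``$\Delta$ closed'' to ``geometric subgroups closed'' is automatic once $\mathcal{G}(\Sigma')=\Delta$ is in hand, and the closedness of each $\Delta_g$ has already been proved.
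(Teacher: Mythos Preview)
Your proposal is correct and follows essentially the same argument as the paper: realise $\mathcal{G}(\Sigma')$ as the intersection of the stabilisers $\Delta_g$ of the same finite family of curves $\{b_i, l_i, k_i\}$, invoke the previous theorem for closedness of each $\Delta_g$, and prove the equality $\mathcal{G}(\Sigma')=\Delta$ via the pants-decomposition argument using that the mapping class groups of pants and annuli are generated by boundary twists and that the dual curves $k_i$ kill the $l_i$-twists. The paper's proof differs only in minor phrasing.
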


\bibliographystyle{alpha}
\bibliography{RelPD2.bib} 
\end{document}